 \def\@textbottom{\vskip \z@ \@plus 1pt}
 \let\@texttop\relax
\title{On almost Gallai colourings in complete graphs}
\author{Alexandr Grebennikov}
\address{ Institute of Science and Technology Austria (ISTA), Am Campus 1, 3400 Klosterneuburg, Austria.}
\email{aleksandr.grebennikov@ist.ac.at}
\author{Letícia Mattos}
\address{Institut für Informatik, Universität Heidelberg, Im Neuenheimer Feld 205, D-69120 Heidelberg, Germany}
\email{mattos@uni-heidelberg.de}
\author{Tibor Szabó}
\address{Institut für Mathematik, Freie Universität Berlin, Arnimallee 6, D-14195 Berlin, Germany}
\email{szabo@math.fu-berlin.de }
\newtheorem{theorem}{Theorem}
\newtheorem{lemma}[theorem]{Lemma}
\newtheorem{claim}[theorem]{Claim}
\numberwithin{theorem}{section}
\theoremstyle{definition}
\newtheorem*{remark}{Remark}
\newcommand*{\Scale}[2][4]{\scalebox{#1}{$#2$}}%
\def\bB{\mathbf{B}}
\def\K{\mathcal{K}}
\def\NN{\mathbb{N}}
\def\PP{\mathbb{P}}
\def\se{\subseteq}
\newcommand{\eps}{\varepsilon}
\renewcommand{\Pr}[1]{\mathbb{P}\left(#1\right)}
\newcommand{\Ex}[1]{\mathbb{E}\left(#1\right)}
\definecolor{lblue}{rgb}{0.5,0.5,1}
\newcommand{\eq}[1]{\begin{equation}\label{eq:#1}}
	\newcommand{\eqe}{\end{equation}}
\pgfplotsset{compat=1.18}
\begin{document}

\begin{abstract}
    For $t \in \mathbb{N}$, we say that a colouring of $E(K_n)$ is \emph{almost $t$-Gallai} if no two rainbow $t$-cliques share an edge. 
    Motivated by a lemma of Berkowitz on bounding the modulus of the characteristic function of clique counts in random graphs, we study the maximum number $\tau_t(n)$ of rainbow $t$-cliques in an almost $t$-Gallai colouring of $E(K_n)$. 
    For every $t \ge 4$, we show that $n^{2-o(1)} \leq \tau_t(n) = o(n^2)$.
    For $t=3$, surprisingly, the behaviour is substantially different.
    Our main result establishes that
    $$\left ( \frac{1}{2}-o(1) \right ) n\log n \le \tau_3(n) = O\big (n^{\sqrt{2}}\log n \big ),$$
    which gives the first non-trivial improvements over the simple lower and upper bounds.
    Our proof combines various applications of the probabilistic method and a generalisation of the edge-isoperimetric inequality for the hypercube.

\end{abstract}

\maketitle

	\section{Introduction}

A colouring of the edges of a graph is called {\em Gallai colouring} if it admits no {\em rainbow triangle}, i.e. a triangle whose edges have pairwise distinct colours. 
    This term 
    was introduced by Gyárfás and Simonyi~\cite{gyarfas2004edge} due to the close connection of these colourings to the seminal work of Gallai~\cite{Gallai67} on comparability graphs, where he 
    obtained a structural classification of all colourings of the complete graph that avoid rainbow triangles. 
    Since its introduction, Gallai colourings have garnered significant attention and have been explored in various contexts, including Ramsey-type problems~\cite{chung1983edge, fujita2010rainbow, gyarfas2010gallai, gyarfas2010ramsey,cameron1986note}, extremal graph theory~\cite{FGP15,de2020number,balogh2019typical}, and graph entropy~\cite{KS00}.        
        
        We introduce a natural extension of Gallai colourings, in which rainbow triangles, or more generally rainbow $t$-cliques, are allowed if they are edge-disjoint. We will then be interested in maximising their number in this scenario.
        We describe our original motivation for this concept in Theorem~\ref{th:characteristic_function}. Yet we find the arising extremal problem attractive in its own right, making it the main focus of the current paper.  
        
        For $t \in \mathbb{N}$, $t\geq 3$, and a graph $G$, we say that a function $c: E(K_n) \to \NN$ is an \emph{almost $t$-Gallai colouring} if no two rainbow $t$-cliques in it share an edge. Here a clique is called {\em rainbow} if its edges have pairwise distinct colours. 
We denote by $\tau_t(n)$ the maximum number of rainbow $t$-cliques that an almost $t$-Gallai colouring 
can contain.
	Observe that $\tau_t(n) \ge \lfloor\frac{n}{t}\rfloor$, as one may     take $\lfloor\frac{n}{t}\rfloor$ vertex-disjoint $t$-cliques, each      coloured arbitrarily in a rainbow fashion, and colour all the           remaining edges with the same (arbitrary) colour. 
        For an immediate upper bound on $\tau_t(n)$ observe that in an almost $t$-Gallai colouring the rainbow $t$-cliques are edge-disjoint, hence we cannot have more than  $\binom{n}{2}/\binom{t}{2}$ of them.
      
        Our first theorem shows that for $t\geq 4$ neither of these simple bounds is tight. In particular, we show that $\tau_t(n)$ is subquadratic and determine its behaviour up to a factor of $n^{o(1)}$.

	\begin{theorem}
		\label{th:at_least_4}
				We have
                \begin{enumerate}
                    \item[$(i)$] $\tau_t(n) = o(n^{2})$ for $t \ge 3$, and
                    \item[$(ii)$] $\tau_t(n) \ge n^{2-o(1)}$ for $t \ge 4$.
                \end{enumerate}
				Moreover, the construction for the lower bound uses only $\binom{t}{2}$ colours.
	\end{theorem}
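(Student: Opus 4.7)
For part (i), my plan is to convert the almost $t$-Gallai hypothesis into a strong local constraint around each rainbow $t$-clique. Fix such a clique $T$ and any vertex $v \notin T$. For every $x \in T$, the $t$-clique $(T \setminus \{x\}) \cup \{v\}$ shares the entire $(t-1)$-clique $T \setminus \{x\}$ with $T$ and therefore cannot be rainbow; this forces a colour collision among the $t-1$ edges $vy$ for $y \in T \setminus \{x\}$, either internally or with one of the $\binom{t-1}{2}$ colours inside $T \setminus \{x\}$. Running this over every $x \in T$ imposes a rigid structure on the $t$-tuple of colours $(c(vy))_{y \in T}$. The main obstacle is converting this vertex-local information into a global subquadratic bound; I would attempt this via the hypergraph removal lemma or a direct supersaturation argument: assuming $\Omega(n^2)$ rainbow $t$-cliques, average over pairs $(T,v)$ to produce two rainbow $t$-cliques sharing an edge, contradicting the hypothesis.

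For part (ii), I would use a Behrend-type construction. Write $n = Nt$, partition the vertex set into $t$ parts $V_1, \dots, V_t$ identified with $[N]$, and let $A \subseteq [N]$ be Behrend's $3$-AP-free set of size $|A| = N^{1-o(1)}$. For each $a \in [N]$ and $b \in A$, define the transversal clique
\[
T_{a,b} = \{(a + (i-1)b,\, i) : i \in [t]\}.
\]
Any two distinct $T_{a,b}$ agree on at most one position, since agreement at two positions uniquely determines both $a$ and $b$; hence the $N|A| = n^{2-o(1)}$ cliques $T_{a,b}$ are pairwise edge-disjoint. Now colour $E(K_n)$ with exactly the $\binom{t}{2}$ colours indexed by pairs $(i,j)$ with $i<j$: assign the edge between $(x,i)$ and $(y,j)$ (for $i<j$) the colour $(i,j)$ whenever $(y-x)/(j-i) \in A$, and a carefully chosen default colour from the same palette otherwise; colour each within-part edge with a fixed palette colour chosen to obstruct non-transversal rainbow cliques. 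By construction each $T_{a,b}$ is rainbow.

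The key verification is that the resulting colouring is almost $t$-Gallai, which I would establish by showing the rainbow $t$-cliques are exactly the $T_{a,b}$. Non-transversal cliques are ruled out because any within-part edge inside such a clique collides in colour with one of the between-part defaults forced into the same clique. For a transversal $C = ((x_1,1), \dots, (x_t,t))$ to be rainbow, the rainbow condition forces enough of the ratios $(x_j - x_i)/(j-i)$ to lie in $A$ that, iteratively applying Behrend's $3$-AP-free property to triples of positions in $[t]$, all consecutive differences $x_{i+1} - x_i$ must coincide; hence $(x_i)$ is an arithmetic progression and $C = T_{a,b}$. The hardest part is calibrating the default colour so that enough pair-constraints engage to pin down every consecutive difference, and this is precisely where the hypothesis $t \geq 4$ enters: for $t \geq 4$ one has several overlapping $3$-AP relations inside $[t]$ available, whereas for $t = 3$ there is only one, which is insufficient to force the progression structure.
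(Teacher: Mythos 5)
Your part (i) plan is incomplete and does not match the paper's argument. The local constraint you extract from $(T \setminus \{x\}) \cup \{v\}$ not being rainbow is valid but never connects to a global bound; ``average over $(T,v)$ to produce two rainbow $t$-cliques sharing an edge'' has no mechanism behind it. The paper's proof is self-contained and different: randomly partition $V(K_n)$ into $t$ parts $V_1,\ldots,V_t$ and simultaneously partition the colour set into $\binom{t}{2}$ random classes $C_{ij}$, keeping an edge between $V_i$ and $V_j$ only if its colour lies in $C_{ij}$. In the resulting $t$-partite graph \emph{every} $t$-clique is automatically rainbow in $c$, hence all its $t$-cliques are pairwise edge-disjoint and number at most $\binom{n}{2} = o(n^t)$; the Clique Removal Lemma then gives $o(n^2)$ of them. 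Since a constant fraction of rainbow $t$-cliques of $c$ survive in expectation, $\tau_t(n)=o(n^2)$. You gesture at removal lemmas in the right spirit, but the random-partition reduction (which converts the almost-Gallai hypothesis into edge-disjointness of \emph{all} $t$-cliques of a subgraph) is the idea you are missing.

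Your part (ii) has a genuine gap: a pure Behrend $3$-AP-free set does not suffice, and the ``carefully chosen default colour'' cannot be calibrated, at least for the most important case $t=4$. If the default is a single fixed pair, say $(1,2)$, then a rainbow transversal $(x_1,\ldots,x_4)$ only forces $(x_j-x_i)/(j-i)\in A$ for $(i,j)\neq(1,2)$. The constraints among positions $\{2,3,4\}$ do produce a genuine $3$-AP and force $x_2,x_3,x_4$ into arithmetic progression with common difference $d\in A$, but the remaining elements $(x_3-x_1)/2$, $(x_4-x_1)/3$ and $d$ of $A$ form a $3$-AP (under any ordering) only if $x_2-x_1=d$, which is exactly what one is trying to prove. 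So $3$-AP-freeness does not pin down $x_1$, and a rogue rainbow transversal obtained this way would share three edges with $T_{x_2-d,\,d}$, destroying the almost $t$-Gallai property. Choosing instead a fixed-point-free permutation of $\binom{[t]}{2}$ as default is worse: any transversal whose ratios all avoid $A$ (the generic case, since $|A|\ll N$) uses all defaults, is still a bijection onto the palette, and is therefore rainbow; such transversals overlap massively. This is exactly why the paper does not build the colouring directly from Behrend. It invokes the Kov\'acs--Nagy graph $G$, built from the $(3,h)$-gadget-free sets of Alon--Shapira (a strict strengthening of $3$-AP-freeness), whose crucial additional feature is that every triangle of $G$ lies inside one of its designated $t$-cliques. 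The colouring then colours each clique of $G$ rainbow and all non-edges of $G$ with colour $1$, and the verification is combinatorial: a rainbow $K_t$ has at most one colour-$1$ edge $ab$, so picking two further vertices $c,d$ (this is precisely where $t\ge 4$ enters), the triangles $acd$ and $bcd$ lie in $G$, each sits in a unique designated $t$-clique, and sharing $cd$ forces those cliques to coincide, whence $ab\in E(G)$.
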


        For the upper bound, we use the 
        Graph Removal Lemma, proved independently by Alon, Duke, Lefmann, Rödl and Yuster~\cite{alon1994algorithmic}, and Füredi~\cite{furedi1995extremal}.
        For the lower bound, we employ the construction of Kov\'acs and Nagy~\cite{KN22} 
        based on the $(3, h)$-gadget-free sets introduced by Alon and Shapira \cite{AlonShapira05}, which in turn relies on a variant of the large $3$-AP-free sets of Behrend.

Interestingly, for $t=3$ the lower bound construction fails to work. In our main theorem we show that this is not a coincidence and improve the upper bound by a polynomial factor. We also improve the trivial linear lower bound by a logarithmic factor. 
All logarithms in this paper are in base 2.

	\begin{theorem}
		\label{th:rainbow_triangles}
			We have 
			\[
				\left (\dfrac{1}{2} - o(1) \right ) \cdot n \log n \le \tau_3(n) \le O(n^{\sqrt{2}} \log n).
			\]
			Moreover, the construction for the lower bound uses only three colours.
		\end{theorem}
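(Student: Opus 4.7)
For the \textbf{lower bound}, I plan to use a recursive doubling construction establishing $\tau_3(2n) \ge 2\tau_3(n) + n$, whose solution with base $\tau_3(2) = 0$ is exactly $(n/2)(\log n - 1)$ on powers of two, i.e., $(1/2 - o(1)) n \log n$. Given an almost-Gallai $3$-colouring of $K_n$, I would extend it to $K_{2n}$ by splitting the vertex set into two halves $V_1, V_2$ of size $n$ each, placing the given colouring on both halves (contributing $2\tau_3(n)$ edge-disjoint rainbow triangles), and constructing $n$ additional rainbow triangles using bipartite edges. Concretely, fix a cyclic permutation $\psi$ of $V_1$ whose Hamilton-cycle edges $\{v, \psi(v)\}$ avoid every inner rainbow triangle (obtained from Dirac's theorem applied to the complement of the inner rainbow-triangle graph, provided we maintain inductively a bound on the rainbow-triangle degree at each vertex), and a bijection $\phi\colon V_1 \to V_2$; for each $v \in V_1$ the candidate triangle
\[
T_v := \{v, \psi(v), \phi(v)\}
\]
becomes rainbow once we colour the two bipartite edges $\{v, \phi(v)\}$ and $\{\psi(v), \phi(v)\}$ with the two colours of $\{1, 2, 3\}$ distinct from $c(v, \psi(v))$. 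Since $\psi$ has no $2$-cycle, the $T_v$'s are pairwise edge-disjoint and, by the freeness of $\psi$'s edges, disjoint from the inner rainbow triangles.

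The chief hurdle for the lower bound is to ensure that no \emph{parasitic} rainbow triangle---one outside the family $\{T_v\}$---shares an edge with a designed triangle, which would spoil the almost-Gallai property. I plan to handle this by colouring all remaining bipartite edges with a single fixed colour, which restricts parasitic rainbow triangles to a small and explicitly describable family of ``near-designed'' configurations, and then resolve the few remaining conflicts via small local modifications to $\phi, \psi$ and recolouring a controlled number of bipartite edges (aided, if needed, by a probabilistic argument).

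For the \textbf{upper bound}, my plan is an isoperimetric argument. Given any almost-Gallai colouring of $K_n$ with $R$ rainbow triangles, the starting observation is that for each vertex $v$ the \emph{link}
\[
L_v := \bigl\{\{u, w\}\colon \{v, u, w\} \text{ is a rainbow triangle}\bigr\}
\]
is a matching on $N(v)$, because each edge at $v$ lies in at most one rainbow triangle; this alone yields only the trivial $R \le n(n-1)/6$. To improve, I would partition $N(v)$ into colour classes $N_c(v) := \{u : c(v, u) = c\}$ and observe that each matched pair of $L_v$ spans two distinct classes while the ``across'' edge carries a third colour. I would then encode each rainbow triangle by a short binary string recording the ``side'' of each of its vertices with respect to its link matching, embedding the rainbow-triangle system into a hypercube-like structure, and apply a generalisation of the edge-isoperimetric inequality for the hypercube (which in its standard form bounds the number of edges spanned by an $m$-vertex subset of $Q_k$ by $(m \log m)/2$) to derive $R \le O(n^{\sqrt 2} \log n)$. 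The exponent $\sqrt 2$ is highly suggestive of a self-referential bootstrap of the form $R \le C \cdot n \cdot R^{1 - 1/\sqrt 2}$, obtained by applying the basic estimate at two different scales.

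The principal obstacle on the lower-bound side is the bookkeeping needed to control parasitic rainbow triangles through the recursive step while maintaining the degree invariant; on the upper-bound side, it is formulating the precise generalisation of the hypercube edge-isoperimetric inequality that yields the exponent $\sqrt 2$, together with the correct encoding of rainbow triangles to which it applies---this is where I expect the essential novelty of the proof to lie.
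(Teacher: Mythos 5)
Your lower-bound plan diverges from the paper and has a real gap. The paper's construction is not a recursive doubling with designed bipartite triangles; it embeds the hypercube graph $Q_m$ directly, using a vertex set $L \cup R$ with $L=\{0,1\}^m$ (size $\approx n$) and $R=[m]$ (size $\log n$). Green edges are exactly the hypercube edges inside $L$, the $L$-to-$R$ edges encode each vertex's $0/1$ coordinates in red/blue, and everything else is red. A green edge $uv$ then lies in a unique rainbow triangle, namely with the one coordinate $i\in R$ where $u$ and $v$ differ, and edge-disjointness is a short verification. Your doubling scheme, by contrast, runs into the parasitic-triangle problem head-on and does not resolve it: if $\{v,\phi(v)\}$ is a designed bipartite edge with colour $c'\notin\{1\}$, then for \emph{every} $b\in V_1$ with $c(v,b)=c''\notin\{1,c'\}$, the triangle $\{v,b,\phi(v)\}$ has colours $c'',c',1$ and is a parasitic rainbow triangle sharing $\{v,\phi(v)\}$ with $T_v$. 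Since $v$ generically has $\Theta(n)$ neighbours of colour $c''$, this produces $\Theta(n)$ conflicts per designed edge and $\Theta(n^2)$ overall --- far more than ``a small and explicitly describable family'' fixable by local modifications. Without a mechanism that pins each special edge into a unique rainbow triangle (which is exactly what the paper's coordinate set $R$ provides), the inductive step does not preserve the almost-Gallai property.

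Your upper-bound plan is morally aligned with the paper at the level of keywords --- the link of each vertex is a matching, there is a hypercube-like encoding of rainbow triangles, a generalised edge-isoperimetric inequality is used, and the exponent $\sqrt 2$ indeed emerges as a fixed point of a recursion --- but the plan is missing the concrete pieces that make the argument close, and the bootstrap you propose, $R \le Cn R^{1-1/\sqrt 2}$, is not what the paper actually runs. The paper first reduces to $3$ colours (Lemma~\ref{lemma:3-colors-enough}), then inducts on $n$ with a balanced/unbalanced case split parameterised by $D=n^{2-\sqrt2}$. In the balanced case (some $v$ has all three colour degrees $\le n-D$), it shows the rainbow triangles \emph{not} contained in a single monochromatic neighbourhood of $v$ number only $O(n\log n)$ (Lemma~\ref{lemma:balanced}), then applies the induction to $N_R(v),N_B(v),N_G(v)$. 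In the unbalanced case (every vertex has a dominant colour), it identifies a large set where one colour has low degree and proves $g(n)\le O\bigl(n\sqrt{g(D)\log n}\bigr)$ (Lemma~\ref{lemma:unbalanced}). The crucial technical device for both lemmas is a bound on rainbow triangles crossing between disjoint $X,Y$ when green $Y\to X$ degrees are at most $d$ (Lemma~\ref{lemma:dnlog}); its $d=0$ case is where the hypercube-like structure appears, via partitioning $X$ by blue neighbourhoods in $Y$. The balanced case additionally needs a tripartite $2$-colouring lemma (Lemma~\ref{lemma:3-parts-2-colors}), which you do not anticipate. So while your intuition about isoperimetry and a $\sqrt2$ fixed point is sound, the decomposition by dominant colour, the crossing lemma with its blue-neighbourhood encoding, and the tripartite lemma are genuinely missing from the proposal and constitute the substance of the proof.
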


    For our lower bound we construct an almost 3-Gallai colouring of $K_n$ containing $(1/2 - o(1)) n \log n$ rainbow triangles by embedding one of the colours into the hypercube graph of dimension $\log n$.    
    For the upper bound we show that in any almost 3-Gallai colouring of $K_n$ there are $O(n^{\sqrt{2}} \log n)$ rainbow triangles. The proof combines various applications of the probabilistic method and a generalisation of the edge-isoperimetric inequality for the hypercube due to Bernstein~\cite{bernstein1967maximally}, Harper~\cite{harper1964optimal}, Hart~\cite{hart} and Lindsey~\cite{lindsey1964assignment}.
    
    We believe that our lower bound lies closer to the true value of $\tau_3(n)$, and therefore we conjecture that the stronger upper bound of $O(n\log n)$ should hold.

    \begin{remark}
    It is worth noting that when the host graph is not complete, the maximum number of rainbow triangles in an almost $3$-Gallai colouring can differ significantly from the bounds in Theorem~\ref{th:rainbow_triangles}.
    For instance, consider the tripartite graph $H$ of Ruzsa and Szemerédi
    ~\cite{ruzsa1978triple}, 
    on $n$ vertices with $n^{2-o(1)}$ edges such that 
    each edge appears in exactly one triangle.
    Colouring the edges of every triangle of $H$ in three different colours, we obtain an almost $3$-Gallai coloring of $H$. Indeed, all the triangles in $H$ are rainbow and also pairwise edge-disjoint. 
    \end{remark}

%
%
%

		 Finally, we demonstrate an application of Theorems~\ref{th:at_least_4} and~\ref{th:rainbow_triangles}, which in fact was our original motivation for the concept of almost Gallai colourings. We obtain an upper bound on the modulus of the characteristic function of clique counts in the binomial random graph $G(n,p)$, for constant $p$. 
		 This improves a result of Berkowitz~\cite[Lemma 18]{berkowitz2018local}.
		
		 \begin{theorem}\label{th:characteristic_function}
			Let $p \in (0,1)$ and $t \ge 4$ be constants, and let $X_t$ be the number of $t$-cliques in $G(n,p)$.
			Then, for $s \in [-\pi, \pi]$ we have
			\begin{align*}
				\big | \Ex{e^{isX_3}} \big | \le \exp \left ( - \Omega \big (s^2 n \log n \big ) \right ) \big |  \qquad \text{and} \qquad \big | \Ex{e^{isX_t}} \big | \le \exp \left ( - \Omega \big ( s^2 n^{2-o(1)} \big ) \right )
			\end{align*}
			for some $c(t)>0$, for all $t \ge 4$.
		 \end{theorem}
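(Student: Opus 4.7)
The plan is to extend the decoupling argument of Berkowitz~\cite{berkowitz2018local} (Lemma~18), replacing his edge-disjoint matching with the much larger family of pairwise edge-disjoint rainbow $t$-cliques provided by Theorems~\ref{th:at_least_4} and~\ref{th:rainbow_triangles}. Fix an almost $t$-Gallai colouring of $K_n$ attaining (up to a $1-o(1)$ factor) the lower bound on $\tau_t(n)$, and let $K^{(1)}, \ldots, K^{(m)}$ denote its rainbow $t$-cliques; thus $m = (1/2 - o(1))n\log n$ for $t = 3$ and $m = n^{2-o(1)}$ for $t \geq 4$. The cliques $K^{(j)}$ are pairwise edge-disjoint by the almost Gallai property.

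For each $j \in \{1,\dots,m\}$, designate an edge $e_j \in K^{(j)}$; since the cliques are edge-disjoint, the $e_j$'s are pairwise distinct. Let $\mathcal{F}$ be the $\sigma$-algebra generated by the statuses of all edges of $K_n$ other than $\{e_1, \ldots, e_m\}$. Conditionally on $\mathcal{F}$, the indicators $\mathbf{1}_{e_1}, \ldots, \mathbf{1}_{e_m}$ are i.i.d.\ $\mathrm{Bernoulli}(p)$, and $X_t$ decomposes as
\[
X_t \;=\; Y_0 + \sum_{j=1}^{m} Y_j\,\mathbf{1}_{e_j} + R,
\]
where $Y_0$ and $Y_j$ are $\mathcal{F}$-measurable, and $R$ is the cross-term collecting contributions from $t$-cliques in $K_n$ that contain two or more of the $e_j$'s. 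If $R$ could be neglected, the conditional characteristic function would factorise as $\prod_j \bigl|(1-p) + p\,e^{isY_j}\bigr|$. Since $K^{(j)}$ itself contributes $1$ to $Y_j$ whenever its remaining $\binom{t}{2}-1$ edges are all present---an $\mathcal{F}$-measurable event of probability $p^{\binom{t}{2}-1}$, independent across $j$ by the edge-disjointness of the $K^{(j)}$'s---a Chernoff estimate yields $Y_j \geq 1$ for a constant fraction of $j$'s with probability $1 - o(1)$. Each such factor is bounded by $\exp(-\Omega(s^2))$ uniformly in $s \in [-\pi, \pi]$, producing the desired $\exp(-\Omega(s^2 m))$.

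The principal obstacle is therefore to control the cross-term $R$, which couples the otherwise independent $\mathbf{1}_{e_j}$'s and spoils the product structure of the conditional characteristic function. I would address this by choosing each $e_j$ uniformly at random among the $\binom{t}{2}$ edges of $K^{(j)}$, independently across $j$. A $t$-clique of $K_n$ contributes to $R$ only if it shares edges with at least two of the $K^{(j)}$'s, and the edge-disjointness of the $K^{(j)}$'s restricts such configurations combinatorially; averaging over the random edge choices, each potentially bad $t$-clique is ``missed'' with positive probability, so that $R$ becomes effectively sparse. A two-stage conditioning---first on the random edge selection, then on $\mathcal{F}$---combined with the linear-part estimate above should reduce the computation to the product bound and give $\bigl|\Ex{e^{isX_t}}\bigr| \leq \exp(-\Omega(s^2 m))$, which upon substituting $m$ yields $\exp(-\Omega(s^2 n\log n))$ for $t = 3$ and $\exp(-\Omega(s^2 n^{2-o(1)}))$ for $t \geq 4$, as claimed.
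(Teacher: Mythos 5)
Your high-level plan (use the edge-disjoint rainbow $t$-cliques together with a decoupling argument) matches the paper's, but the decomposition you then set up is \emph{not} the Berkowitz decoupling, and the gap this opens is real. The paper applies Berkowitz's $\alpha$-operator (his Lemma~6, restated as Theorem~\ref{th:decoupling}): one introduces two independent copies $G^0, G^1$ of $G(n,p)$, partitions $E(K_n)$ into colour classes $B_0,\ldots,B_k$ with $k = \binom{t}{2}-1$ according to an extremal almost $t$-Gallai colouring, and forms $\alpha_{\bB}(X_t) = \sum_{v\in\{0,1\}^k}(-1)^{|v|}P(X^0_{B_0}, X^{v_1}_{B_1},\ldots,X^{v_k}_{B_k})$. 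The alternating sum makes any clique missing some colour $i\ge 1$ contribute zero, and by pigeonhole the survivors split into (a) rainbow cliques, each with exactly one $B_0$-edge $f^K$, contributing $C_K\,x^0_{f^K}$ with $C_K=\prod_{e\ne f^K}(x_e^0-x_e^1)\in\{-1,0,1\}$, and (b) cliques using no $B_0$-edge, which are independent of $X^0_{B_0}$ and so factor out as a unit-modulus phase. This is what delivers both a clean product over edge-disjoint rainbow cliques and, crucially, coefficients that are $0$ or $\pm1$.

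Your naive conditioning $X_t = Y_0 + \sum_j Y_j\mathbf{1}_{e_j} + R$ lacks both of these features, and this is where the argument breaks. First, the cross-term $R$ (cliques hitting two or more designated edges) genuinely destroys the product structure of the conditional characteristic function; randomizing the choice of $e_j$ inside each $K^{(j)}$ does not make $R$ vanish, it only changes its distribution, and you give no mechanism for controlling it afterward. Second, and more fundamentally, the per-factor estimate fails: $Y_j$ counts \emph{all} $t$-cliques through $e_j$ with the remaining edges present (on the order of $p^{t-2}n^{t-2}$ for a typical $e_j$), not just $K^{(j)}$, so $Y_j$ is a large integer rather than $\pm1$. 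The bound $\bigl|(1-p)+p\,e^{isY_j}\bigr| \le \exp(-\Omega(s^2))$ does \emph{not} follow from $Y_j\ge 1$: for instance $Y_j = 2$ and $s = \pi$ gives $e^{isY_j}=1$ and no decay at all (Lemma~\ref{lemma:charfuncbernoulli} requires $\|sY_j/(2\pi)\|$ to be bounded away from $0$, which a large integer $Y_j$ can spoil even for $s\in[-\pi,\pi]$). The $\alpha$-operator is precisely what forces the coefficients to lie in $\{-1,0,1\}$, so that when $|C_K|=1$ the bound $\|s/(2\pi)\|=\Omega(|s|)$ applies; without it, the argument does not close.
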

        This theorem implies that $\big|\Ex{e^{isX_t}}\big|$ is exponentially small for $s \ge n^{-1+\eps}$ and $t \ge 4$. Such estimates (combined with estimates for other ranges of $s$ obtained by different methods, see \cite{berkowitz2018local}) are commonly used to prove anticoncentration results for the random variable $X_t$.

		The rest of the paper is organised as follows.
        In Section~\ref{sec:t_ge_4}, we prove Theorem~\ref{th:at_least_4}.
		In Section~\ref{sec:construction}, we provide the construction for the lower bound in Theorem~\ref{th:rainbow_triangles}.
		In Section \ref{sec:outline_upper}, we prove Theorem~\ref{th:rainbow_triangles} assuming some technical theorem and lemmas, which are then proved in Sections~\ref{sec:preliminary} and~\ref{sec:lemma-balanced}. 
		  Finally, in Section~\ref{sec:characteristic_function} we prove Theorem~\ref{th:characteristic_function}.

	\section{Proof of Theorem \ref{th:at_least_4}}
	\label{sec:t_ge_4}

        First we prove the upper bound.
        Fix an almost $t$-Gallai colouring $c: E(K_n) \to \mathbb{N}$.
        The idea is to construct a subgraph $G$ of $K_n$ in which (1) the number of rainbow $t$-cliques in this colouring is of the same order as in the original graph $K_n$, and (2) every $t$-clique is rainbow. Since these cliques are pairwise edge-disjoint, we can then apply the Clique Removal Lemma to $G$.        
        
        Let $V_1\cup \cdots \cup V_t = V(K_n)$ be a random partition of the vertex set into $t$ parts, where each vertex $v \in V(K_n)$ is independently assigned to $V_i$ with probability $1/t$ for all $i = 1,\ldots,t$.
        Similarly, let $\bigcup_{1 \le i < j \le t} C_{ij} = c(E(K_n))$ be a random partition of the colours used by $c$ into ${t\choose 2}$ parts, where each colour is independently assigned to $C_{ij}$ with probability $\binom{t}{2}^{-1}$.

        Let $G$ be the random $t$-partite subgraph of $K_n$ with parts $V_1,\ldots, V_t$, where an edge between $V_i$ and $V_j$ is kept only if its colour was assigned to the part $C_{ij}$. Formally, 
        \begin{align*}
            E(G):=\bigcup_{1\leq i < j\leq t} \{e \in E(K_n[V_i,V_j]) : c(e) \in C_{ij}\}.
        \end{align*}
        Note that every $t$-clique in $G$ is a rainbow clique in $K_n$. 
        Indeed, every $t$-clique in $G$ has exactly one vertex in each $V_i$, and hence any two of its edges have different colours since
		$C_{ij} \cap C_{k\ell} = \emptyset$ for $\{i, j\} \neq \{k, \ell\}$.
	
    Next we show that, in expectation, a constant proportion of the rainbow $t$-cliques in $K_n$ remains in $G$. Fix an arbitrary rainbow $t$-clique $K$ induced by $c$ in $K_n$, and denote its vertices by $v_1,\ldots,v_t$. 
    Let $S_t$ be the set of all permutations on $\{1,\ldots,t\}$.
    The probability that $K \se G$ is precisely
    \begin{align*}
        \mathbb{P}(K \se G) & = \sum \limits_{\sigma \in S_t}  \mathbb{P} \Big ( v_i \in V_{\sigma(i)} \text{ and } c(v_iv_j) \in C_{\sigma(i)\sigma(j)} \text{ for all } 1 \le i < j \le t \Big ) = \dfrac{t!}{t^t} \cdot \dfrac{1}{\binom{t}{2}^{\binom{t}{2}}}.
    \end{align*}
        Above we used that $|S_t|=t!$, that the probability of each vertex $v_i$ being assigned to the part $V_{\sigma(i)}$ for every $i \in \{1,\ldots, t\}$ is $t^{-t}$, that the probability of each edge colour $c(v_iv_j)$ belonging to the correct colour class $C_{\sigma(i)\sigma(j)}$ for every $ 1 \le i < j \le t$ is $\binom{t}{2}^{-\binom{t}{2}}$, and that these events are mutually independent.

        Let $\rho_t(c)$ denote the number of rainbow $t$-cliques in $K_n$ induced by $c$.
        By linearity of expectation, the expected number of $t$-cliques in $G$ is
        \begin{align}
            \dfrac{t!}{t^t} \cdot \dfrac{1}{\binom{t}{2}^{\binom{t}{2}}} \cdot \rho_t(c). 
        \end{align}
        Therefore, there exists a $t$-partite subgraph $G^* \se K_n$ containing at least $\Omega(\rho_t(c))$ rainbow $t$-cliques. Moreover, all these cliques are edge-disjoint due to the almost $t$-Gallai property of $c$. 
        
        Recall that, by construction, all $t$-cliques in $G^*$ are rainbow, hence we conclude that all $t$-cliques of $G^*$ are pairwise edge-disjoint.
        Consequently, the total number of $t$-cliques in $G^*$ is at most $\binom{n}{2} = o(n^t)$.
        The Clique Removal Lemma ~\cite{alon1994algorithmic,furedi1995extremal} states that if a graph has $o(n^t)$ $t$-cliques, then it is possible to remove $o(n^2)$ of its edges to make it $K_t$-free. 
        However, we need to remove at least one edge from each of the $\Omega(\rho_t(c))$ pairwise edge-disjoint $t$-cliques in $G^*$ to make it $K_t$-free, which yields $\rho_t(c) = o(n^2)$.

        \medskip
        
        For the lower bound we consider a construction due to Kov\'acs and Nagy \cite[Theorem 1.3]{KN22} of a graph $G$ on $n$ vertices with the following properties:
	\begin{enumerate}
		\item [$(1)$] $G$ contains $n^2/e^{O(\sqrt{\log n})}$ copies of $t$-cliques;
		\item [$(2)$] each edge of $G$ belongs to exactly one $t$-clique;
		\item [$(3)$] any triangle of $G$ fully lies inside one of the $t$-cliques.
	\end{enumerate}
	
	Now we construct an almost $t$-Gallai colouring of $K_n$ using the graph $G$ as follows. 
	Consider an arbitrary embedding of $G$ into $K_n$, and colour the edges on each $t$-clique of $G$ with pairwise distinct colours $1, 2, \ldots, \binom{t}{2}$. 
	Colour all the edges not in $E(G)$ with colour $1$.
	By property (1), this colouring contains the desired number of rainbow copies of $K_t$, which come from the $t$-cliques of $G$. 
	By property (2), these $t$-cliques are edge-disjoint.

    It remains to check that any copy $K$ of a rainbow $t$-clique in this colouring of $K_n$ is contained in $G$.
	%
%
	The only edge of $K$ that might not be contained in $E(G)$ must have colour $1$. Let $a$ and $b$ be the endpoints of this edge, and let $c$ and $d$ be two other vertices of $K$ (here we use that $t \ge 4$). 
	Since triangles $acd$ and $bcd$ have no edge of colour $1$, they are triangles of $G$. 
	Property (3) implies that each of these triangles lies inside a $t$-clique of $G$, and, as they share the edge $cd$, by property (2) these $t$-cliques must coincide. 
	In particular, the edge $ab$ also lies inside the same $t$-clique of $G$.
	Thus, all the edges of $K$ are contained in $E(G)$, completing the proof of the lower bound.

	\section{Construction for the lower bound}
	\label{sec:construction}

In this section, we prove the lower bound in Theorem~\ref{th:rainbow_triangles}.  
First, we consider the case when $n = 2^m + m$, for some $m \in \NN$.
	Divide the vertex set of $K_n$ into two parts: $R$, indexed by the elements of $[m]:= \{1,\ldots, m\}$, and $L$, indexed by $\{0,1\}^m$, the set of 0-1 vectors of length $m$.
	Then we colour the edges of $K_n$ as follows:
	\begin{enumerate}
		\item For $u \in L = \{0,1\}^m$ and $i \in R = [m]$, colour the edge $u i$ blue if $u_i = 1$ and red otherwise.
		\item For $u, v \in L$, colour the edge $uv$ green if the Hamming distance $\sum_{i=1}^m |u_i - v_i|$ between them equals $1$.
		\item Colour all the remaining edges red.
	\end{enumerate}
	Figure~\ref{fig:lower_colouring} below illustrates this colouring for $m=2$, where there are $n = 6$ vertices.

\begin{figure}[ht]
\centering
\definecolor{qqzzqq}{rgb}{0,0.6,0}
\definecolor{ffqqqq}{rgb}{1,0,0}
\definecolor{qqqqff}{rgb}{0,0,1}
\definecolor{ududff}{rgb}{0.30196078431372547,0.30196078431372547,1}
\definecolor{cqcqcq}{rgb}{0.7529411764705882,0.7529411764705882,0.7529411764705882}
\begin{tikzpicture}[scale=0.7, line cap=round,line join=round,>=triangle 45,x=1cm,y=1cm]
\draw [color=cqcqcq, xstep=0cm,ystep=0cm] (0.8,0) grid (12,7);
\clip(0.8,0) rectangle (12,7);
\draw [line width=0.8pt,color=qqqqff] (6,5)-- (10,4);
\draw [line width=0.8pt,color=qqqqff] (6,5)-- (10,3);
\draw [line width=0.8pt,color=ffqqqq] (3,2)-- (10,4);
\draw [line width=0.8pt,color=qqqqff] (6,2)-- (10,4);
\draw [line width=0.8pt,color=qqqqff] (3,5)-- (10,3);
\draw [line width=0.8pt,color=ffqqqq] (6,2)-- (10,3);
\draw [line width=0.8pt,color=ffqqqq] (3,2)-- (10,3);
\draw [line width=0.8pt,color=ffqqqq] (3,5)-- (10,4);
\draw [line width=0.8pt,color=qqzzqq] (6,5)-- (6,2);
\draw [line width=0.8pt,color=qqzzqq] (3,2)-- (6,2);
\draw [line width=0.8pt,color=qqzzqq] (3,2)-- (3,5);
\draw [line width=0.8pt,color=qqzzqq] (3,5)-- (6,5);
\draw [rotate around={-89.1815445383102:(4.516833430415449,3.4949546368931714)},line width=0.8pt] (4.516833430415449,3.4949546368931714) ellipse (3.2950036971853027cm and 2.0922601273488284cm);
\draw [rotate around={-89.84685826518263:(9.98908049492407,3.4984536984444903)},line width=0.8pt] (9.98908049492407,3.4984536984444903) ellipse (1.5398468023609406cm and 0.7294482494371953cm);
\begin{scriptsize}
\draw [fill=black] (3,5) circle (3pt);
\draw[color=black] (2.1211983458828807,5.6271325403263495) node [thick, font=\fontsize{10}{0}\selectfont, thick] {(0,1)};
\draw [fill=black] (6,5) circle (3pt);
\draw[color=black] (7,5.6271325403263495) node [thick, font=\fontsize{10}{0}\selectfont, thick] {(1,1)};
\draw [fill=black] (6,2) circle (3pt);
\draw[color=black] (7,1.5) node [thick, font=\fontsize{10}{0}\selectfont, thick] {(1,0)};
\draw [fill=black] (3,2) circle (3pt);
\draw[color=black] (2.1211983458828807,1.5) node [thick, font=\fontsize{10}{0}\selectfont, thick] {(0,0)};
\draw [fill=black] (10,4) circle (3pt);
\draw[color=black] (10.4,4) node [thick, font=\fontsize{10}{0}\selectfont, thick] {$1$};
\draw [fill=black] (10,3) circle (3pt);
\draw[color=black] (10.4,3) node [thick, font=\fontsize{10}{0}\selectfont, thick] {$2$};
\end{scriptsize}
\end{tikzpicture}
\caption{\Small{The colouring for $m = 2$. Only the edges of rainbow triangles are shown.}}
\label{fig:lower_colouring}
\end{figure}
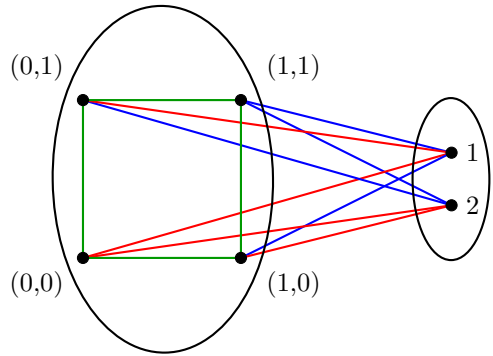

Every rainbow triangle must include a green edge, which only appear inside $L$. 
We claim that every green edge $uv$, with $u,v\in L$, is contained in exactly one rainbow triangle. Note that the third vertex of a rainbow triangle containing $u$ and $v$ must be in $R$ since $L$ contains no blue edges. 
By definition, there is exactly one vertex in $R$ that is connected to $u$ and $v$ by edges of different colours: the vertex $i\in [m]$, where $i$ is the unique coordinate in which $u$ and $v$ differ. 

It remains to show that these $m2^{m-1}$ rainbow triangles are pairwise edge-disjoint. We have already checked that they cannot share green edges. If a red or blue edge lies in two rainbow triangles, it has to connect $L$ to $R$. Let $v \in L$ and $i \in R$ be its endpoints. Then the green edges of these two rainbow triangles are adjacent to $v$ (denote these edges by $vu$ and $vu'$, where $u, u' \in L$, $u \neq u'$). But then, by the same argument as above, $v$ and $u$ differ only in the coordinate $i$, and $v$ and $u'$ differ only in the coordinate $i$, contradicting $u \neq u'$. So, the rainbow triangles cannot share red or blue edges as well.

	Therefore, our colouring is almost 3-Gallai and has exactly one rainbow triangle for each edge on the green Hamming cube in $L$, which implies that
	\[
		\tau_3(n) \ge m 2^{m-1} = \big(1/2 + o(1)\big) n \log n.
	\] 

    \medskip
	
	Now we return to the general case where $n$ is an arbitrary positive integer. 
	Let $m$ be the smallest integer such that $n \le 2^m + m$, and consider the almost 3-Gallai colouring of $K_{2^m + m}$ described above, with $V(K_{2^m + m}) = L \cup R$.
	Each vertex $u\in L = \{0,1\}^m$ may be viewed as a natural number $b(u) := \sum \limits_{i=1}^{m} 2^{i-1}u_i$, written in binary.

	Let $L': = b^{-1} ([0, n-m-1])$ be the set of vectors representing the smallest $n-m$ natural numbers. 
	The restriction of our almost 3-Gallai colouring from $L\cup R$ to $L' \cup R$ is an almost 3-Gallai colouring of $K_n$, and the number of rainbow triangles in it is equal to the number of green edges inside $L'$. 
    
    It remains to check that the number of green edges inside $L'$ is at least $(1/2-o(1))n \log n$. This follows from the standard proof of the fact that the edge-isoperimetric inequality for the hypercube is asymptotically sharp. While this result is implicit in the original works of Bernstein~\cite{bernstein1967maximally}, Harper~\cite{harper1964optimal}, Hart~\cite{hart}, and Lindsey~\cite{lindsey1964assignment}, we could not find a reference to an explicit computation. For completeness, we include it here.
    
	First, we prove an upper bound on the number $e_G(L', L\setminus L')$ of green edges between $L'$ and $L \setminus L'$ by classifying them according to the coordinate in which their endpoints differ.	
	For each $i \in [m]$, let $T_i$ be the set of green edges $u u^{(i)}$ where $u \in L'$, and the unique vertex $u^{(i)}$ that differs from $u$ only in the $i$-th coordinate is contained in the complement $L\setminus L'$. 
	As $b(L') = [0, n-m-1]$ and $\big |b(u) - b(u^{(i)}) \big | = 2^{i-1}$, we have $n-m \leq b(u^{(i)}) < n-m +2^{i-1}$ for the endpoints $u^{(i)} \in L\setminus L'$ of the edges in $T_i$. As $T_i$ is a matching, we have $|T_i| \le 2^{i-1}$, and hence we have 
	\begin{align}
	\label{eq:count_green}
		e_G(L',L\setminus L') = \sum_{i=1}^m |T_i| \le \sum_{i=1}^m 2^{i-1} = 2^{m} - 1.
	\end{align}
    Since each vertex in $L$ has green degree $m$ in our colouring of $K_{2^m+m}$, it follows from~\eqref{eq:count_green} that the number of green edges inside $L'$, and hence the number of rainbow triangles in our almost 3-Gallai colouring of $E(K_n)$, is at least
	\[
		\frac{1}{2}\big(m\cdot |L'| - e_G(L', L \setminus L') \big) \ge \frac{1}{2}\big(m(n-m) - 2^{m}\big) = \left (\dfrac{1}{2} - o(1) \right ) \cdot n \log n.
	\]

	\section{The upper bound}
	\label{sec:outline_upper}

        In this section we prove the upper bound in Theorem~\ref{th:rainbow_triangles}.
        From this point onward, we will refer to almost 3-Gallai colourings simply as almost Gallai colourings. Note that the definition of an almost Gallai colouring places no restriction on the number of colours used.
        In our first lemma, we establish that restricting ourselves to only three colours affects the maximum number of rainbow triangles by at most a constant factor.

        Let $g(n)$ denote the maximum number of rainbow triangles in an almost Gallai 3-colouring of $K_n$. 
        For a colouring $c: E(K_n)\to \mathbb{N}$, we let $\rho(c)$ denote the number of rainbow triangles induced by $c$.
 
	
	\begin{lemma}
	\label{lemma:3-colors-enough}
For every $n\in \mathbb{N}$ we have
\[
			g(n) \le \tau_{3}(n) \le \dfrac{9}{2} g(n).
		\]
	\end{lemma}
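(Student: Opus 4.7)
The inequality $g(n) \le \tau_3(n)$ is immediate, since any almost Gallai $3$-colouring is in particular an almost Gallai colouring. For the nontrivial inequality $\tau_3(n) \le \tfrac{9}{2} g(n)$, the plan is to mimic the random-partition argument used in the upper bound of Theorem~\ref{th:at_least_4}, but now partitioning \emph{colours} rather than vertices.

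Fix an almost Gallai colouring $c : E(K_n) \to \mathbb{N}$ attaining $\rho(c) = \tau_3(n)$ rainbow triangles. Partition the palette $c(E(K_n))$ randomly into three classes $C_1, C_2, C_3$ by placing each colour, independently, into $C_i$ with probability $1/3$. Define a new colouring $c' : E(K_n) \to \{1,2,3\}$ by setting $c'(e) = i$ whenever $c(e) \in C_i$.

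The two key observations are the following. First, a triangle $T$ that is not rainbow under $c$ has two edges of the same $c$-colour; these edges automatically receive the same $c'$-colour, so $T$ is not rainbow under $c'$ either. Consequently, the set of rainbow triangles of $c'$ is a subset of the rainbow triangles of $c$; since the latter are pairwise edge-disjoint, so are the former, and hence $c'$ is automatically almost Gallai. Second, for a fixed rainbow triangle $T$ of $c$ with edge-colours $\alpha, \beta, \gamma$ (pairwise distinct), the probability that $T$ remains rainbow under $c'$ equals the probability that $\alpha, \beta, \gamma$ land in three distinct classes, namely $\tfrac{3!}{3^3} = \tfrac{2}{9}$.

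By linearity of expectation, $\mathbb{E}[\rho(c')] = \tfrac{2}{9}\,\rho(c) = \tfrac{2}{9}\tau_3(n)$, so some realisation $c^*$ of $c'$ is an almost Gallai $3$-colouring of $K_n$ with at least $\tfrac{2}{9}\tau_3(n)$ rainbow triangles, yielding $g(n) \ge \tfrac{2}{9}\tau_3(n)$ as desired. There is no substantial obstacle here; the only point worth checking carefully is that the almost Gallai property is preserved, which is ensured by the ``subset'' observation above, rather than by any property of the random partition.
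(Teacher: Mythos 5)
Your proof is correct and takes essentially the same approach as the paper: the random partition of the palette into three classes $C_1, C_2, C_3$ is precisely the random map $f : \mathbb{N} \to \{R,G,B\}$ used there, and the two key observations (preservation of the almost Gallai property via the ``rainbow in $c'$ implies rainbow in $c$'' inclusion, and the $2/9$ survival probability) together with linearity of expectation match the paper's argument step for step.
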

	\begin{proof}
		The lower bound $\tau_3(n)$ is trivial, as in $g(n)$ we take the maximum over a smaller set. 
		For the upper bound, first note that combining an almost Gallai colouring $c: E(K_n) \to \NN$ of $K_n$ with an arbitrary map $f: \NN \to \{R, G, B\}$ gives a $3$-coloring $f \circ c: E(K_n) \to \{R, G, B\}$ of $K_n$, which still satisfies the almost Gallai property. Indeed, any triangle which is rainbow in $f \circ c$ is also rainbow in $c$. So, the rainbow triangles of $f\circ c$  are pairwise edge-disjoint, and hence $\rho(f \circ c) \le g(n)$.

    Consider an almost Gallai colouring $c$ with $\rho(c)= \tau_3(n)$, i.e. with the maximum possible number of rainbow triangles, 
    and let $f$ be the random map, which sends each element of $\NN$ independently to $R, G$ or $B$ with probability $1/3$. 
		Then each of the $\tau_3(n)$ rainbow triangles of $c$ remains rainbow in $f \circ c$ with probability $\frac{3!}{3^3} = \frac{2}{9}$. 
		Thus, by linearity of expectation, we have
		\begin{equation}
		\label{eq:circ_2}
			\Ex{\rho(f \circ c)} = \frac{2}{9}\rho(c) =\frac{2}{9}\tau_3(n).
		\end{equation}
        As $\Ex{\rho(f \circ c)} \le g(n)$, the desired upper bound on $\tau_{3}(n)$ follows.
  \end{proof}
 
 For the rest of this section, we focus on proving that $g(n) = O(n^{\sqrt{2}} \log n)$. First we introduce two key lemmas and apply them to deduce our main theorem. The proofs of the lemmas are postponed to subsequent sections.  
  We will use the three colours red (R), blue (B) and green (G).
	When the colouring is clear from the context, we will denote the red, blue and green neighbourhoods of a vertex $v$ by $N_R(v)$, $N_B(v)$ and $N_G(v)$, respectively.
	The sizes of these neighbourhoods will be denoted by $d_R(v)$, $d_B(v)$ and $d_G(v)$, respectively.
 

  The first lemma gives an $O(n \log n)$ upper bound on the number of rainbow triangles which are not entirely contained in any of the three monochromatic neighbourhoods of a given vertex. 
  If there is a vertex $v$ such that none of its three colour degrees is ``large'', we can apply the induction hypothesis to each of these neighbourhoods and combine it with the bound of the lemma to obtain an efficient upper bound on the total number of rainbow triangles.

	\begin{lemma}
		\label{lemma:balanced}
		There exists an absolute constant $C_1 >0$ such that the following holds for all $n \in \mathbb{N}$.
		For any almost Gallai $3$-colouring $c: E(K_n) \to \{R, B, G\}$ of $K_n$ and any vertex $v \in V(K_n)$, the number of rainbow triangles of $c$ which are not fully contained in any of the sets $N_R(v)$, $N_B(v)$ or $N_G(v)$ is at most $C_1 n \log n$.
		\end{lemma}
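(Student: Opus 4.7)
\textbf{Proof plan for Lemma~\ref{lemma:balanced}.}

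The plan is to analyse the rainbow triangles not fully contained in any monochromatic neighbourhood of $v$ by a case analysis on how their vertex sets distribute across the partition $V(K_n)=\{v\}\cup V_R\cup V_B\cup V_G$, where $V_X:=N_X(v)$. Such a triangle falls into exactly one of three cases: (i) it contains $v$; (ii) it has a $(2,1)$-split with two vertices in some class $V_X$ and the third in $V_Y$, $Y\neq X$; or (iii) it has a $(1,1,1)$-split with one vertex in each $V_X$.

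In case (i), a rainbow triangle $vab$ satisfies $a\in V_X$, $b\in V_Y$ with $X\neq Y$ and $c(ab)=Z$, the remaining colour. Since the almost Gallai property places every $v$-incident edge in at most one rainbow triangle, there are $O(n)$ such triangles in total; a short computation summing over the three unordered pairs $\{X,Y\}$ yields the bound $\lfloor(n-1)/2\rfloor$. In case (ii) with $a,b\in V_X$ and $c\in V_Y$, applying the almost Gallai property to the triangles $vac$ and $vbc$ (which share an edge with $abc$) forces $c(ab)=Z$, the third colour, and $\{c(ac),c(bc)\}=\{X,Y\}$. Thus the triangle is uniquely determined by its ``base'' edge $ab$, which is a non-majority internal edge of $V_X$ (colour $\neq X$), and the almost Gallai property guarantees at most one such rainbow triangle per base edge. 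In case (iii), the analogous case analysis (using the three auxiliary triangles $vab$, $vac$, $vbc$) shows that $c(ab)\in\{X_a,X_b\}$ where $X_a,X_b$ denote the classes of $a,b$, and similarly for $ac$ and $bc$; combined with the rainbow condition, this forces the colour pattern to be one of two cyclic types, and each such triangle is uniquely determined by a particular cross-class edge.

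After this classification the proof reduces to bounding, by $O(n\log n)$, the combined number of base edges for case (ii) and cross-class edges for case (iii). The main obstacle is that these counts are not controlled by any single almost Gallai bound centred at $v$: each non-majority internal edge or ``bad'' cross-class edge may a priori correspond to an independent rainbow triangle, and simple double counting only yields quadratic bounds. I would resolve this by invoking the technical result from Section~\ref{sec:preliminary}, which generalises the edge-isoperimetric inequality for the hypercube. The point is that the edges arising from cases (ii) and (iii) naturally carry a hypercube-like structure on $V_R\cup V_B\cup V_G$ analogous to the one in the lower bound construction of Section~\ref{sec:construction}; the isoperimetric inequality then forces the number of such edges to be $O(n\log n)$. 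The logarithmic factor enters precisely through this isoperimetric step, matching the $\log n$ factor appearing in Theorem~\ref{th:rainbow_triangles}'s lower bound.
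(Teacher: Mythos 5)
Your classification into cases (i)--(iii) is exactly the paper's (Types 1--3), and the colour-pattern deductions in each case are correct: Type 1 triangles correspond bijectively to ``special'' edges (which form a matching, giving $O(n)$); in Type 2 with $a,b\in V_X$, $c\in V_Y$, the base edge $ab$ must have the third colour $Z$; and Type 3 triangles come in exactly two cyclic colour patterns.

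The treatment of case (ii) can indeed be pushed through via the hypercube-like machinery, which is precisely what the paper does: the special edges between any two neighbourhoods $V_X,V_Y$ form a matching, so Lemma~\ref{lemma:dnlog} applies with $d=1$ to each of the three pairs and gives $O(n\log n)$.

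However, there is a genuine gap in case (iii). The hypercube-like structure underlying Lemma~\ref{lemma:2-colors} and Lemma~\ref{lemma:dnlog} comes from the following mechanism: fix a base class $X$ and an apex class $Y$ with all $X$--$Y$ edges red or blue, partition $X$ by the blue-neighbourhood profiles $N_B(\cdot)\cap Y$, and observe that a green edge $x_1x_2$ inside $X$ participates in a crossing rainbow triangle iff the two profiles differ in exactly one coordinate. This symmetric-difference characterisation is what turns the relevant green edges into a hypercube-like graph. For a $(1,1,1)$-split triangle with $a\in V_R$, $b\in V_B$, $c\in V_G$, this mechanism breaks down: the ``base'' edge $ac$ crosses two distinct classes, and for a candidate apex $b$ the admissibility condition is not a symmetric difference but an ordered one (e.g.\ $ba$ red \emph{and} $bc$ blue, with the swapped pair $ba$ blue, $bc$ green not giving a rainbow triangle). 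As a result there is no natural profile-partition of $V_R\cup V_B\cup V_G$ whose Hamming-distance-one pairs capture exactly the Type~3 base edges, and Lemma~\ref{lemma:dnlog} cannot be applied (the green degree from $Y$ into $X$ is not controlled). You would need a new idea here.

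The paper resolves Type 3 by a different route: delete one random endpoint of every special edge to obtain a tripartite subgraph with no special edges, encode the two admissible colour patterns as the two colours of an auxiliary good red/blue colouring of that tripartite graph, and then prove a standalone extremal statement (Lemma~\ref{lemma:3-parts-2-colors}, via a minimum-counterexample argument) that a good $2$-colouring of a complete tripartite graph on $n$ vertices has at most $4n-10$ monochromatic triangles. This actually yields the stronger bound $O(n)$ for Type~3, not just $O(n\log n)$; the $\log n$ in the lemma comes only from Types~1 and~2.
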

  The proof of this lemma appears in Section~\ref{sec:lemma-balanced}.  Lemma~\ref{lemma:balanced} is effective if there is a vertex whose neighbourhoods in each of the three colours are not very large. Otherwise, every vertex has a ``dominant colour'' on its incident edges, and the inductive bound is not sufficient. In this case, it turns out that most vertices share the same dominant colour. Our second lemma provides an estimate on the number of rainbow triangles inside a set of vertices, which contains relatively few edges in one of the colours.  

%


	\begin{lemma}
	\label{lemma:unbalanced}
		There exists an absolute constant $C_2 >0$ such that the following holds for all $D, n \ge 3$. 
		Let $c: E(K_n) \to \{R, B, G\}$ be an almost Gallai $3$-colouring of $K_n$ such that the green degree $d_G(v)\le D$ for all $v \in K_n$. Then, the number of rainbow triangles in $c$ is at most $C_2 n \sqrt{g(D) \log n}$.
	\end{lemma}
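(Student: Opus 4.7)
The plan is to prove the lemma by strong induction on $n$, with the bound itself serving as the inductive hypothesis for all smaller values. The base case $n = O(D)$ is handled by the trivial bound $T \le \binom{n}{2}/3$, which is dominated by $C_2 n \sqrt{g(D)\log n}$ for $C_2$ large enough.

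For the inductive step, the natural starting point is to apply Lemma~\ref{lemma:balanced} to some vertex $v \in V(K_n)$, obtaining
\[
T \le C_1 n \log n + T_R(v) + T_B(v) + T_G(v),
\]
where $T_X(v)$ denotes the number of rainbow triangles contained in $N_X(v)$. The hypothesis $d_G(v) \le D$ immediately yields $T_G(v) \le g(|N_G(v)|) \le g(D)$, while both $N_R(v)$ and $N_B(v)$ inherit the same green-degree bound $D$, so the inductive hypothesis may be applied to them to give $T_R(v) + T_B(v) \le C_2 (n-1-d_G(v)) \sqrt{g(D)\log n}$.

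The main obstacle is that this single-vertex application alone does not close the induction: the coefficient $n-1-d_G(v) \le n-1$ is too close to $n$ to absorb the $C_1 n \log n$ and $g(D)$ overheads in general. To overcome this, I plan to exploit the extra structural information provided by the almost Gallai property. For each vertex $v$ I associate the \emph{profile vector} $\chi_v \colon V \setminus N_G[v] \to \{R,B\}$ defined by $\chi_v(w) = c(vw)$. The almost Gallai property forces the following dichotomy for each green edge $uv$: if $uv$ lies in a rainbow triangle with third vertex $w$, then $\chi_u$ and $\chi_v$ agree on their common domain except exactly at the coordinate $w$; if $uv$ is green but lies in no rainbow triangle, then $\chi_u$ and $\chi_v$ agree on their entire common domain. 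Consequently, rainbow triangles correspond to Hamming-distance-one pairs of profiles in an appropriate sense, and I plan to bound their number via a generalisation of the edge-isoperimetric inequality for the hypercube (the same tool invoked in Section~\ref{sec:construction}). The complication is the up-to-$D{+}1$ ``undefined'' coordinates in each profile coming from its green neighbourhood: handling them carefully is what replaces the clean $\log n$ factor of the pure hypercube bound by the weaker $\sqrt{g(D)\log n}$ factor. Combining this isoperimetric estimate for profile pairs with a probabilistic averaging of the Lemma~\ref{lemma:balanced} decomposition over the choice of $v$ should yield the target bound $C_2 n \sqrt{g(D)\log n}$.
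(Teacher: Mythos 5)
Your proposal identifies the right ingredients but does not close the argument. You correctly observe that the naive inductive scheme via Lemma~\ref{lemma:balanced} fails — the $n-1-d_G(v)$ coefficient is too large to absorb the $C_1 n\log n$ overhead — and you have the right intuition that the almost Gallai property forces a hypercube-like (Hamming-distance-one) structure on the ``profile vectors.'' This second observation is exactly what drives the paper's Lemma~\ref{lemma:dnlog}: in its proof, $X$ is partitioned by the blue-neighbourhood-in-$Y$ profiles, and the resulting graph is shown to be hypercube-like. But the proposal's final step, ``combining this isoperimetric estimate with a probabilistic averaging of the Lemma~\ref{lemma:balanced} decomposition over the choice of $v$ should yield the target bound,'' is a hope rather than a derivation; it is not clear what is being averaged, or how the $\sqrt{g(D)\log n}$ factor would emerge from it.

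The precise mechanism that produces the $\sqrt{g(D)\log n}$ is what is missing. The paper does \emph{not} prove Lemma~\ref{lemma:unbalanced} by induction on $n$; instead it double-counts \emph{nice quadruples} $(v_G,v_R,v_B,u)$ — a rainbow triangle $v_Gv_Rv_B$ together with a green edge $v_Gu$ at its ``red--blue apex'' $v_G$. Counting by $v_G$ gives $X=\sum_v d_G(v)c_{RB}(v)$; counting by $u$, the almost Gallai property forces $v_R,v_B$ to form a rainbow triangle inside $N_G(u)$ or a ``crossing'' triangle between $N_G(u)$ and $N_R(u)\cup N_B(u)$, and the latter is controlled by the hypercube-like Lemma~\ref{lemma:dnlog} with $d=1$, giving $X=O(ng(D))$. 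The $\sqrt{\cdot}$ then comes from a threshold $d=\sqrt{g(D)/\log n}$ on green degree: high-green-degree vertices are handled via $\rho(c)\le 2X/d$, while low-green-degree vertices are handled by applying Lemma~\ref{lemma:dnlog} again, with the small parameter $d$ in place of $D$. Your proposal contains neither the quadruple double-count nor the threshold split, and without them the isoperimetric bound alone only yields something of order $nD\log n$, far weaker than claimed. (A minor further issue: your base case ``$n=O(D)$'' does not actually make the trivial $\binom{n}{2}/3$ bound dominated by $C_2 n\sqrt{g(D)\log n}$, since that would need $g(D)\gtrsim D^2/\log D$, which is false; a base case in terms of small absolute $n$ would be needed.)
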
	

The proof of this lemma is postponed to Section~\ref{sec:preliminary}. 
%
 We now proceed to give the 
 proof of Theorem \ref{th:rainbow_triangles} assuming Lemmas \ref{lemma:balanced} and \ref{lemma:unbalanced}.

	\begin{proof}[\textbf{Proof of the upper bound in Theorem \ref{th:rainbow_triangles}}]
	 	We define $F(x) = x^{\sqrt{2}} \log x$ for $x>0$ and extend continuously by $F(0)=0$. Then by Lemma \ref{lemma:3-colors-enough} it is sufficient to prove that $g (n) \leq C F(n)$ for some constant $C$. We show this by induction on $n$. 
 
  Let $c: E(K_n) \rightarrow \{R,B, G\}$ be an arbitrary almost Gallai $3$-colouring of $K_n$. We divide the proof into two cases based on whether every vertex has a dominant colour or not.
		Let $D = n^{2 - \sqrt{2}} \leq n$.
  
  \textbf{The balanced case.} First suppose that there is a vertex $v$ such that its degree in each colour in $c$ is at most $n-D$, that is
		\[
			\max\left\{d_R(v), d_B(v), d_G(v)\right\} \le n-D.
		\]
		Each rainbow triangle is either fully contained in one of the neighbourhoods $N_R(v), N_B(v)$ and $N_G(v)$, or it is not. Hence by Lemma~\ref{lemma:balanced} the colouring $c$ contains at most
		\[
			g(d_R(v)) + g(d_B(v)) + g(d_G(v)) + C_1 n \log n
		\]
		rainbow triangles. We apply the induction hypothesis to estimate the first three summands. Since $F$ is increasing and convex, we have
		\[
			\max_{x+y+z \le n, \max(x, y, z) \le n-D} F(x) + F(y) + F(z) \le F(0) + F(D) + F(n-D). 
		\]
		Considering the derivative $F'(x) = x^{\sqrt{2}-1}\left(\sqrt{2} \log x + \log e\right)$ we obtain
		\[
			F(n) - F(n - D) \ge \sqrt{2} \cdot D (n-D)^{\sqrt{2} - 1} \log (n-D) = (\sqrt{2} - o(1)) \cdot n \log n.
		\]
		Therefore,
		\begin{align}
			g(n) & \le C \big(F(D) + F(n-D)\big) + C_1 n \log n \nonumber \\
			& \le  C \cdot F(n) + C \cdot \left (F(n - D) + F(D) - F(n) + \frac{C_1}{C} n \log n\right )  \nonumber \\
			& \le C \cdot F(n) + C \cdot \left ( \left (\frac{C_1}{C} - \sqrt{2} + o(1) \right ) \cdot n \log n + D^{\sqrt{2}} \log D \right ). \label{eq:C_1/C}
		\end{align}

		Since $D^{\sqrt{2}} \log D \le n^{2\sqrt{2} - 2} \log n = o(n)$, one can ensure that the second summand of \eqref{eq:C_1/C} is negative (for large $n$) by taking $C$ sufficiently large compared to $C_1$.
	
		\textbf{The unbalanced case.} Now we are in the situation when each vertex has a ``dominant'' colour: such that the degree in this colour is at least $n - D$. Depending on the dominant colour we put each vertex in one of three sets $V_R, V_G$ or $V_B$, 
		\[
			V_R \cup V_G \cup V_B = V(K_n).
		\]
		We claim that one of these three sets has to be large. Indeed, double-counting the number of red edges between $V_R$ and its complement gives
		\[
			|V_R| (n-D-|V_R|) \le Dn.
		\]
		Solving this quadratic inequality yields that
		\[
			\text{ either } |V_R| \le D \text{ or } |V_R| \ge n - 2D,  
		\]
		and similar bounds for $V_B$ and $V_G$. Consequently, for one of the colours the corresponding set is of size at least $n-2D$. 
		By swapping colours we may assume that $|V_R| \ge n - 2D$. 
  
  The restriction of our colouring to $V_R$ satisfies the assumption of Lemma~\ref{lemma:unbalanced} (even for both blue and green colours, but here one of them is enough). Using this lemma and the induction hypothesis, we conclude that the number of rainbow triangles of $c$ inside $V_R$ is at most  
		\begin{equation}
		\label{eq:c_V_R}		
   C_2 n \sqrt{g(D) \log n} \le C_2 \sqrt{C} \cdot n D^{\frac{\sqrt{2}}{2}} \log n.
		\end{equation}
		
		It remains to count the rainbow triangles that intersect the complement of $V_R$, which has size at most $2D$. Observe that each vertex is contained in at most $D$ rainbow triangles: indeed, each rainbow triangle includes at least one edge of a ``non-dominant'' colour of the vertex, and no such edge belongs to more than one rainbow triangle by the almost Gallai property. Thus the number of rainbow triangles not contained in $V_R$ is at most $2D^2$. Combining this with \eqref{eq:c_V_R} gives that the number of rainbow triangles in $c$ is at most
		\[
   C_2 \sqrt{C} \cdot n D^{\frac{\sqrt{2}}{2}} \log n + 2D^2 = C_2\sqrt{C} \cdot n^{\sqrt{2}} \log n + 2n^{4 - 2\sqrt{2}}.
		\]
		Taking $C$ sufficiently large with respect to $C_2$ guarantees that the last expression is at most $C n^{\sqrt{2}} \log n =F(n)$ for every $n$. 
        \end{proof}

        \section{Preliminary lemmas and proof of Lemma~\ref{lemma:unbalanced}}
        \label{sec:preliminary}

        The key to proving Lemmas~\ref{lemma:balanced} and~\ref{lemma:unbalanced} is the ability to bound the number of rainbow triangles crossing between two disjoint sets of vertices $X$ and $Y$. In the simplest case, one colour (say, green) is completely absent from the edges between $X$ and $Y$.
In this scenario the green edges within $X$ that form a rainbow triangle with a vertex in $Y$ will turn out to exhibit a hypercube-like structure. 
For $y \in \mathbb{N}$, we say that graph $H$ is a {\em $y$-dimensional hypercube-like} graph if there exists a partition $V(H) = \bigcup_{S \se [y]} V_S$ of its vertices such that:
\begin{enumerate}
    \item [(1)] for every edge $uv \in E(H)$, there exist $S, T \se [y]$ such that $|S\bigtriangleup T| = 1$, with $u \in V_S$ and $v \in V_T$;
    \item [(2)] $H[V_S,V_T]$ is a matching (not necessarily perfect or non-empty) for all sets $S,T \se [y]$ such that $|S\bigtriangleup T|=1$.
\end{enumerate}

The well-known edge-isoperimetric inequality for the hypercube, first established in the 1960s by  Bernstein~\cite{bernstein1967maximally}, Harper~\cite{harper1964optimal}, Hart~\cite{hart} and Lindsey~\cite{lindsey1964assignment}, determines for all integers $x, y \in \mathbb{N}$ the maximum number of edges that can be induced by an $x$-element subset in the $y$-dimensional hypercube graph. The answer is always at most $\frac{1}{2}x\log x$.   
Our first lemma generalises this to hypercube-like graphs.



\begin{lemma}\label{lemma:2-colors}  For all integers $x, y \in \mathbb{N}$,  the number of edges induced by an $x$-element subset of a $y$-dimensional hypercube-like graph is at most $\frac{1}{2}x\log x$. 
\end{lemma}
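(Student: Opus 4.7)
The plan is to prove this by induction on the hypercube dimension $y$, adapting the classical induction proof of the edge-isoperimetric inequality for the hypercube. The base case $y=0$ is immediate, since such a graph has no edges.

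For the inductive step, given a $y$-dimensional hypercube-like graph $H$ with partition $V(H)=\bigcup_{S\subseteq[y]}V_S$, I would split the vertex set according to the first coordinate:
\[
V^{(0)}:=\bigcup_{S\subseteq [y],\, 1\notin S}V_S, \qquad V^{(1)}:=\bigcup_{S\subseteq [y],\, 1\in S}V_S.
\]
The induced subgraphs $H[V^{(0)}]$ and $H[V^{(1)}]$ are themselves $(y-1)$-dimensional hypercube-like graphs, via the natural identification of the corresponding index sets with the power set of $\{2,\ldots,y\}$. The bipartite subgraph between $V^{(0)}$ and $V^{(1)}$ is the disjoint union of the matchings $H[V_S,V_{S\cup\{1\}}]$ over $S\not\ni 1$. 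Since the individual matchings use pairwise disjoint vertex sets on each side (the $V_S$ are disjoint by definition), their union is itself a matching in $H$.

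Now take an arbitrary $x$-element subset $U\subseteq V(H)$, and set $x_0:=|U\cap V^{(0)}|$ and $x_1:=|U\cap V^{(1)}|$; by symmetry we may assume $x_0\le x_1$. By the induction hypothesis applied to the two half-subgraphs, the number of edges of $H$ induced by $U\cap V^{(i)}$ is at most $\tfrac{1}{2}x_i\log x_i$ for $i\in\{0,1\}$, while the number of edges of $H$ between $U\cap V^{(0)}$ and $U\cap V^{(1)}$ is at most $\min(x_0,x_1)=x_0$ because these crossing edges form a matching. The proof then reduces to the numerical inequality
\[
\tfrac{1}{2}x_0\log x_0+\tfrac{1}{2}x_1\log x_1+x_0\le \tfrac{1}{2}(x_0+x_1)\log(x_0+x_1),
\]
which is precisely the inequality that drives the classical hypercube edge-isoperimetric proof; it follows from a short one-variable calculus check after the substitution $t=x_1/x_0\ge 1$, where the inequality becomes $(1+t)\log(1+t)-t\log t\ge 2$.

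The proof closely mirrors the classical argument, so I do not anticipate a serious obstacle. The only point requiring a little care is the verification that the two half-subgraphs are honest hypercube-like graphs of dimension $y-1$ and that the union of the between-matchings is a single matching; both follow directly from the definition of a hypercube-like graph.
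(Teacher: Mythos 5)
Your proof is correct and follows essentially the same route as the paper: induct on the dimension $y$, split the vertex set into the two ``halves'' indexed by whether a fixed coordinate lies in $S$, observe that each half is a $(y-1)$-dimensional hypercube-like graph, note that the crossing edges form a single matching (so at most $\min(x_0,x_1)$ of them survive in the induced subgraph), and finish with the same elementary inequality — the paper states it as $2\lambda\le -\lambda\log\lambda-(1-\lambda)\log(1-\lambda)$ for $\lambda\in[0,1/2]$ and verifies it via concavity, which is just a reparametrisation of your $(1+t)\log(1+t)-t\log t\ge 2$ for $t\ge 1$. The only cosmetic difference is that the paper splits on coordinate $y$ rather than coordinate $1$ and works directly with $e(H)$ (which is equivalent, since an induced subgraph of a hypercube-like graph is again hypercube-like).
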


	\begin{proof}
		The proof goes by induction on $y$.
        Let $H$ be a $y$-dimensional hypercube-like graph.
        By definition, we know that there exists a partition $(V_S: S \se [y])$ of $V(H)$ satisfying items 1 and 2 above.
        Now, set $V_1 = \bigcup_{S \ni y} V_S$ and $V_2 = V(H)\setminus V_1 = \bigcup_{S \not\ni y} V_S$.
        The graphs $H[V_1]$ and $H[V_2]$ are $(y-1)$-dimensional hypercube-like, and hence we may use the induction hypothesis to obtain that
        \[e(H[V_1]) \le \dfrac{1}{2}|V_1|\log |V_1| \qquad \text{and} \qquad e(H[V_2]) \le \dfrac{1}{2}|V_2|\log |V_2|.\]
        
        By item 1, the only potential edges of $H$ between $V_1$ and $V_2$ are those between parts $V_S \se V_1$ and $V_{T} \se V_2$ such that $S = T \cup \{y\}$.
        From item 2 it follows that $H[V_1,V_2]$ is a matching, and hence the number of edges $e(H[V_1,V_2])$ is at most $\min (|V_1|,|V_2|)$.
		By summing it up, we obtain
		\[
			e(H) \le \frac{1}{2} \big( |V_1| \log |V_1| + |V_2| \log |V_2| + 2\min(|V_1|, |V_2|) \big).
		\]
This is at most $2^{-1}(|V_1| + |V_2|) \log (|V_1| + |V_2|)$ by the following elementary inequality applied with $\lambda = \frac{\min(|V_1|, |V_2|)}{|V_1| + |V_2|}$:
\[
    2\lambda  \le - \lambda \log(\lambda) - (1-\lambda) \log(1-\lambda) \text{ for } \lambda \in [0, 1/2].
\]
To prove this inequality, note that it holds for $\lambda = 0$, $\lambda = 1/2$, and that the right-hand side is concave.
	\end{proof}

	Our next lemma bounds the number of rainbow triangles crossing between two disjoint sets of vertices $X$ and $Y$, also addressing the situation when some green edges do appear between $X$ and $Y$.
    For an almost Gallai colouring of $K_n$, disjoint sets $X,Y \se V(K_n)$ and a colour $q \in \{R,G,B\}$, we denote by $\tau^{\mathrm{q}}_{\mathrm{rb}}(X, Y)$ the number of rainbow triangles $yx_1x_2$ with $y \in Y$, $x_1, x_2 \in X$ and $x_1 x_2$ coloured in $q$.

	\begin{lemma}
	\label{lemma:dnlog}
	Let $d \in \mathbb{N}_{\ge 0}$, $V(K_n) = X \cup Y$ be a partition and $c:E(K_n)\to \{R,G,B\}$ be an almost Gallai colouring of $K_n$ such that $|N_G(y) \cap X| \le d$ for all $y \in Y$.
	Then, we have 
	\begin{align*}
    \tau^{\mathrm{G}}_{\mathrm{rb}}(X, Y) \le \frac{e}{2} (d+1) |X| \log|X|.
	\end{align*}
	\end{lemma}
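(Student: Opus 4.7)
The plan is to reduce the general statement to the $d = 0$ case, which is handled directly by Lemma~\ref{lemma:2-colors}. I would sample a random subset $X' \subseteq X$ by including each vertex independently with probability $p = 1/(d+1)$, and set $Y^* := \{y \in Y : N_G(y) \cap X' = \emptyset\}$. In the restricted colouring on $K[X' \cup Y^*]$ every edge between $X'$ and $Y^*$ is red or blue, so the restricted setting is exactly the $d = 0$ situation.

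Next, I would consider the labelling $L \colon X' \to 2^{Y^*}$ defined by $L(x) := \{y \in Y^* : c(xy) = B\}$ and the graph $H^*$ on $X'$ whose edges are the green pairs $x_1 x_2 \subseteq X'$ that form a rainbow triangle with some vertex $y(e) \in Y^*$ (unique by the almost Gallai property). The crux is to show that $L(x_1) \bigtriangleup L(x_2) = \{y(e)\}$ for every such edge $e = x_1 x_2$. On the one hand, $y(e)$ lies in the symmetric difference because $\{c(y(e)x_1), c(y(e)x_2)\} = \{R, B\}$. On the other hand, any other $y' \in Y^*$ satisfies $c(y'x_1), c(y'x_2) \in \{R, B\}$ (since $y' \in Y^*$ forbids green edges from $y'$ to $X'$), and these two values must be equal, for otherwise $y' x_1 x_2$ would be a second rainbow triangle sharing the edge $x_1 x_2$, contradicting the almost Gallai property. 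The same argument also shows that for each fixed $y \in Y^*$ the edges of $H^*$ with $y(e) = y$ form a matching. Hence $H^*$, partitioned by $L$, is a $|Y^*|$-dimensional hypercube-like graph, and Lemma~\ref{lemma:2-colors} gives $|E(H^*)| \le \tfrac{1}{2}\,|X'| \log |X'|$.

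Finally, I would take expectations. A rainbow triangle $(y, x_1, x_2)$ contributing to $\tau^{\mathrm{G}}_{\mathrm{rb}}(X, Y)$ becomes an edge of $H^*$ precisely when $x_1, x_2 \in X'$ and $y \in Y^*$. Since $x_1, x_2 \notin N_G(y)$, the two events are independent, so their joint probability equals $p^2 (1-p)^{|N_G(y) \cap X|} \ge p^2 (1-p)^d$. Using $|X'| \log |X'| \le |X'| \log |X|$ and $\mathbb{E}|X'| = p|X|$ I would obtain
\[
    \tau^{\mathrm{G}}_{\mathrm{rb}}(X, Y) \cdot p^2 (1-p)^d \;\le\; \mathbb{E}|E(H^*)| \;\le\; \tfrac{1}{2}\, \mathbb{E}\bigl[|X'| \log |X'|\bigr] \;\le\; \tfrac{p}{2}\, |X| \log |X|.
\]
Substituting $p = 1/(d+1)$ and invoking the standard inequality $(1-1/(d+1))^d \ge 1/e$ then yields the desired bound. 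The only non-routine step I expect is establishing the identity $L(x_1) \bigtriangleup L(x_2) = \{y(e)\}$: this is exactly where the property $N_G(y) \cap X' = \emptyset$ for $y \in Y^*$ is used in an essential way, as it prevents any extra coordinate from entering the symmetric difference via a green edge.
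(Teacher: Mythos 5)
Your proposal is correct and follows essentially the same approach as the paper: sample $X' \subseteq X$ with probability $1/(d+1)$, let $Y^*$ be the vertices of $Y$ with no green neighbour in $X'$, apply the $d=0$ hypercube-like analysis (partition by blue neighbourhoods in $Y^*$, invoke Lemma~\ref{lemma:2-colors}) to the sampled pair, and compare expectations using the independence that follows from $x_1, x_2 \notin N_G(y)$. The paper organizes this slightly differently by first proving the $d=0$ case as a standalone statement and then reducing, but the estimates, the constant $e/2$, and all key ideas coincide.
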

	\begin{proof} 

        First, we consider the case $d = 0$, when there are no green edges between $X$ and $Y$. 
            
        Let $E'$ be the set of those green edges in $X$ which form a rainbow triangle with a vertex from $Y$. By the almost Gallai-property we have $\tau_{rb}^G (X,Y) = |E'|$. Our plan is to show that the graph $(X,E')$ is a $|Y|$-dimensional hypercube-like graph and conclude by the previous lemma that $|E'| \leq \frac{1}{2}|X|\log|X|$. 
           
We partition $X$ according to the blue neighborhoods of its vertices in $Y$. For each set $S \se Y$, define $X_S := \{ v \in X: N_B(v)\cap Y = S \}$, so $X=\bigcup_{S \se Y} X_S$. 
       
Since every edge between $X$ and $Y$ is red or blue, for any green edge $x_1x_2$ in $X$ and a vertex $y\in Y$ the triangle $yx_1x_2$ is rainbow if and only if $y$ is contained in the symmetric difference of the blue neighborhoods $N_B(x_1)$ and $N_B(x_2)$ of the endpoints of the green edge. 
Hence, by the almost Gallai property, for any green edge $x_1x_2$ in $X$ the sets $N_B(x_1)\cap Y $ and $N_B(x_2)\cap Y$ are either equal (in this case the green edge does not participate in any $X,Y$-crossing rainbow triangle) or one of them is the other with one vertex removed.
So, any edge in $E'$ must connect two sets $X_S$ and $X_T$ such that $|S\bigtriangleup T| = 1$, as required in a hypercube-like graph. 

We still need to check that $E'$ forms a matching between two such sets $X_S$ and $X_T$. A green edge $x_1x_2$ with $x_1\in X_S$ and $x_2\in X_T$ is in a rainbow triangle with the unique vertex $y\in Y$: the one in $S \bigtriangleup T$.
However, for every vertex $y\in Y$ the green edges in $X$ which form a rainbow triangle with $y$ must form a matching: otherwise, if two such green edges shared a common vertex, the edge from $y$ to this vertex would be part of two rainbow triangles, contradicting the almost Gallai property of our colouring.
So, $(X,E')$ is indeed a hypercube-like graph, and by  Lemma~\ref{lemma:2-colors} we have
        \begin{align*}
        \tau^{\mathrm{G}}_{\mathrm{rb}}(X, Y) = |E'| \le \frac{1}{2}|X|\log |X|.
        \end{align*}

		From now on, we assume that $d \ge 1$. 
        Let $S_X$ be a $1/(d+1)$-random subset of $X$ and set $S_Y \subset Y$ to be the set of vertices not connected to $S_X$ by a green edge.
		As all edges between $S_X$ and $S_Y$ are red or blue, it follows from the $d = 0$ case that
        \[
		\tau^{\mathrm{G}}_{\mathrm{rb}}(S_X, S_Y) \le \dfrac{1}{2} |S_X| \log |S_X| \le \frac{1}{2} |S_X| \log |X|.
		\]
		By linearity of expectation, we have
		\begin{equation}
		\label{eq:X_A,X_B-upper}
	    \Ex{\tau^{\mathrm{G}}_{\mathrm{rb}}(S_X, S_Y)} \le \frac{1}{2}\log |X| \cdot \Ex{|S_X|} = \frac{|X| \log |X|}{2(d+1)}.
		\end{equation}		
		On the other hand, 
		for a fixed rainbow triangle $yx_1x_2$ with $y \in Y$, $x_1, x_2 \in X$ and $x_1 x_2$ coloured green we have
		\begin{align*}
			\PP(x_1, x_2 \in S_X, \; y \in S_Y) & = 
			\PP(x_1, x_2 \in S_X, \; N_G(y)\, \cap \, S_X   = \emptyset) \\
			& \ge \dfrac{1}{(d+1)^2} \cdot \left (1 - \dfrac{1}{d+1} \right )^d \\
			& \ge \frac{1}{(d+1)^2}\cdot\frac{1}{e}. 
		\end{align*}
        In the first inequality above we used that $|N_G(y) \cap X| \le d$. The second inequality follows from the fact that $(1+1/d)^d \le e$ for all $d \in \mathbb{N}$. Therefore, 
		\begin{equation}
		\label{eq:X_A,X_B-lower}
			\Ex{\tau^{\mathrm{G}}_{\mathrm{rb}}(S_X, S_Y)} \ge \frac{1}{e(d+1)^2} \cdot \tau^{\mathrm{G}}_{\mathrm{rb}}(X, Y).
		\end{equation}
		By combining~\eqref{eq:X_A,X_B-upper} and~\eqref{eq:X_A,X_B-lower}, we conclude the proof.
	\end{proof}

	

\begin{proof}[\textbf{Proof of Lemma~\ref{lemma:unbalanced}}]

    Since the green degree of each vertex is bounded by $D$, the number of rainbow triangles in $c$ is at most $nD$. As $g(D) = \Omega(D)$, if $g(D) < \log n$ then we automatically have
    \[
    nD = O(n \sqrt{g(D)\log n}).
    \]
    Thus, we may assume that $g(D) \ge \log n$.
    

We count the rainbow triangles in $c$ based on the vertex incident to their red and blue edges. 
Let $c_{RB}(v)$ be the number of rainbow triangles $vxy$ such that $vx$ is red and $vy$ is blue. 
Recall that $\rho(c)$ denotes the number of rainbow triangles in $c$.
Our goal is to obtain an upper bound on
\[
\sum_{v \in V(K_n)} c_{RB}(v) = \rho(c). 
\]
		
	We say that a quadruple $(v_G, v_R, v_B, u)$ of pairwise distinct vertices of $K_n$ is \textit{nice} if the edge $v_G v_R$ is blue, the edge $v_G v_B$ is red, and the edges $v_R v_B$ and $v_G u$ are green. 

    \begin{figure}[ht]
    \centering
    \definecolor{qqzzqq}{rgb}{0,0.6,0}
    \definecolor{ffqqqq}{rgb}{1,0,0}
    \definecolor{qqqqff}{rgb}{0,0,1}
    \begin{tikzpicture}[scale=0.7, line cap=round,line join=round,>=triangle 45,x=1cm,y=1cm]
    \draw [line width=0.8pt,color=qqqqff] (1.5, 1.5) -- (3, 0.3);
    \draw [line width=0.8pt,color=ffqqqq] (1.5, 1.5) -- (3, 2.7);
    \draw [line width=0.8pt,color=qqzzqq] (-0.5, 1.5) -- (1.5, 1.5);
    \draw [line width=0.8pt,color=qqzzqq] (3, 0.3) -- (3, 2.7);
    \begin{scriptsize}
    \draw [fill=black] (-0.5, 1.5) circle (2pt);
    \draw[color=black] (-0.5, 1.1) node {$u$};
    \draw [fill=black] (1.5, 1.5) circle (2pt);
    \draw[color=black] (1.5, 1.1) node {$v_G$};
    \draw [fill=black] (3, 0.3) circle (2pt);
    \draw[color=black] (3.5, 0.2) node {$v_R$};
    \draw [fill=black] (3, 2.7) circle (2pt);
    \draw[color=black] (3.5, 2.5) node {$v_B$};
    \end{scriptsize}
    \end{tikzpicture}
    \caption{\Small{A nice quadruple of vertices.}}
    \end{figure}
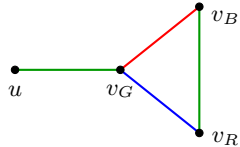
    To estimate the number of rainbow triangles in $K_n$ 
    we use a double-counting argument on the number $X$ of nice quadruples in $c$.
    As the number of nice quadruples $(v_G, v_R, v_B, u)$ with $v_G = v$ is $d_G(v) c_{RB}(v)$, we have
    \begin{align}\label{eq:X-2}
		X = \sum_{v \in V(K_n)} d_G(v) c_{RB}(v).
    \end{align}
    Another way to estimate $X$ is to count the number of nice quadruples $(v_G, v_R, v_B, u)$ with fixed $u$, and then sum over all vertices $u$.
    Observe that for any nice quadruple $(v_G, v_R, v_B, u)$ there are only three options for the colours of $u v_R$ and $u v_B$, which do not create a rainbow triangle sharing an edge with $v_R v_B v_G$:
	\begin{enumerate}
		\item $uv_R$ is green and $uv_B$ is red;
		\item $uv_R$ is blue and $uv_B$ is green;
		\item $uv_R$ and $uv_B$ are both green.
	\end{enumerate}
        For a fixed vertex $u$, the number of nice quadruples of type (1) is equal to $\tau^{\mathrm{B}}_{\mathrm{rb}}(N_G(u), N_R(u))$.
        As the blue edges between $N_G(u)$ and $N_R(u)$ form a matching, we can apply Lemma~\ref{lemma:dnlog} with $d=1$, $X = N_G(u)$ and $Y = N_R(u)$ (switching the colours green and blue).
        Thus, the number of nice quadruples of type (1) is at most $4 d_G(u) \log d_G(u)$.  
        Similarly, the number of nice quadruples of type (2) with a fixed vertex $u$ is equal to $\tau^{\mathrm{R}}_{\mathrm{rb}}(N_G(u), N_B(u))$, and by Lemma~\ref{lemma:dnlog} this is at most $4d_G(u) \log d_G(u)$.

        In a nice quadruple of type (3) the rainbow triangle is fully contained in $N_G(u)$, and hence their number is at most $g(d_G(u))$. Summing it up, we obtain
	\begin{align}\label{eq:X-1}
		X \le \sum_{u \in V(K_n)} 8 d_G(u) \log d_G(u)  + g(d_G(u)) \le 2^6 \sum_{u \in V(K_n)} g(d_G(u))  \le 2^6n\cdot g(D).
	\end{align}
        Here we used the lower bound in Theorem~\ref{th:rainbow_triangles} (for a sufficiently large $n$) proved in Section~\ref{sec:construction}, and the fact that the green degrees are at most $D$.
	
	Let $U$ be the subset of vertices of $K_n$ with green degree at most $d := \sqrt{g(D) / \log n } \ge 1$.
    We consider two cases:

	\textbf{Case 1.} $\sum_{v \notin U} c_{RB}(v) \ge \rho(c) / 2$.
	
	In this case, we have
	\begin{align*}
		\rho(c) \le 
		2 \sum_{v \notin U} c_{RB}(v) \le 
		\dfrac{2}{d} \cdot \sum_{v \notin U} d_G(v) c_{RB}(v) \le \dfrac{2X}{d} \le  \dfrac{2^7ng(D)}{d} 
= 2^7n \sqrt{g(D) \log n},
	\end{align*}
	where we used~\eqref{eq:X-2}, \eqref{eq:X-1} and the definition of $d$.

    \textbf{Case 2.} $\sum_{v \in U} c_{RB}(v) \ge \rho(c) / 2$.

    The sum $\sum_{v \in U} c_{RB}(v)$ counts the number of rainbow triangles $vxy$ with $v \in U$ and edge $xy$ coloured green. First, we estimate the number of such triangles where the edge $xy$ touches $U$. Since each vertex in $U$ is adjacent to at most $d$ green edges, there are at most $d|U|$ such triangles. 
    Next, we estimate the number of such triangles with $x,y \notin U$. As $d_G(u) \le d$ for all $u \in U$, we can apply Lemma \ref{lemma:dnlog} with $X = V \setminus U$ and $Y = U$, which gives an upper bound of $2(d + 1)n\log n$.


    Summing it up, we obtain
    \[
    \rho(c) \le 2 \sum_{v \in U} c_{RB}(v) \le 2d|U| + 4(d + 1)n \log n \le (6d+4)n \log n.
    \]
    As $d = \sqrt{g(D) / \log n}$, this concludes the proof.
    \end{proof}

    \section{Proof of Lemma~\ref{lemma:balanced}}
    \label{sec:lemma-balanced}

    We say that a 2-colouring of a complete tripartite graph is \emph{good} if no two monochromatic triangles share an edge.
    Let $\gamma(n)$ be the maximum number of monochromatic triangles in a good red-blue colouring of a complete tripartite graph on $n$ vertices.
    The following lemma plays a key role in the proof of Lemma~\ref{lemma:balanced}.

	\begin{lemma}
	\label{lemma:3-parts-2-colors}
	For all $n \ge 3$ we have
	\[ \gamma(n) \le 4n-10.\]
	\end{lemma}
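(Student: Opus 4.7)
My plan is to prove Lemma~\ref{lemma:3-parts-2-colors} by induction on $n$, the total number of vertices of the tripartite graph. The base case $n = 3$ is trivial: the graph is $K_{1,1,1}$, which contains a single triangle, so $\gamma(3) \le 1 \le 2 = 4\cdot 3 - 10$. For the inductive step, given a good red-blue colouring of $K_{\alpha,\beta,\gamma}$ with $\alpha + \beta + \gamma = n \ge 4$, I would locate a vertex $v$ contained in at most $4$ monochromatic triangles; removing $v$ leaves a good $2$-colouring on a complete tripartite graph on $n-1$ vertices, and the inductive hypothesis yields at most $4(n-1) - 10 = 4n-14$ monochromatic triangles in the remainder, for a total of at most $4n - 10$.

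The key structural tool for bounding the monochromatic triangle degree of a vertex is its \emph{link}. For a vertex $v$ in part $A$, the red monochromatic triangles through $v$ are in bijection with the red edges of the bipartite subgraph induced between the red neighbourhoods of $v$ in $B$ and in $C$; by the good property, any two such triangles at $v$ sharing an edge would share an edge through $v$, so the corresponding edges form a matching in this bipartite subgraph. An analogous statement holds for blue. The triangle degree $t(v)$ is therefore bounded by the sum of two matching numbers, each of which is at most $\min(|B|,|C|)$, and an analogous estimate holds for $v\in B$ or $v\in C$.

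The main obstacle is the regime in which every vertex lies in at least $5$ monochromatic triangles, so that $3t = \sum_v t(v) \ge 5n$ and hence $t \ge 5n/3$; this is still consistent with the target bound $t \le 4n-10$ whenever $n\ge 5$, so the induction cannot immediately proceed by merely finding a vertex of small triangle degree. To overcome this, I would exploit the edge-disjointness of the monochromatic triangles \emph{globally}: the collection of mono triangles forms a linear $3$-uniform tripartite hypergraph in which each colour class is independently a triangle packing, i.e.\ a partial-Latin-square structure across the three parts. Carefully analysing how the red and blue link matchings at every vertex interlock --- or, alternatively, applying a discharging argument that assigns an initial charge of $4$ to each vertex and redistributes it via local rules reflecting the good property --- should yield the required inequality $3t \le 12n - 30$, which is exactly what is needed to close the induction.
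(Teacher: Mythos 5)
Your proposal sets up the right basic framework — induction on $n$, removing a vertex of small monochromatic-triangle degree — and the link-matching observation is correct. But you identify the main obstacle yourself and then do not resolve it: when every vertex lies in $\ge 5$ monochromatic triangles, the counting $3t \ge 5n$ is far too weak, since the target $t \le 4n-10$ permits average triangle degree close to $12$, so one cannot even guarantee a vertex of degree $\le 4$. The proposed fixes ("analyse how the red and blue link matchings interlock" or "a discharging argument") are not carried out, and it is exactly here that the real content of the lemma lies. As written this is a gap, not a proof.

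The paper's argument is genuinely different and in particular does \emph{not} proceed by deleting one vertex and applying the bound to a graph on $n-1$ vertices. It first reduces by symmetry to bounding the red triangles by $2n-5$, then takes a minimal counterexample and shows every vertex lies in $\ge 3$ red triangles. Picking $v$ of minimum red-triangle degree in part $V_1$, it splits $V_2, V_3$ into the red and blue neighbourhoods of $v$ and proves a structural dichotomy: each $u \in V_1 \setminus \{v\}$ has \emph{all} edges to $V_{2,B}$ blue, or \emph{all} edges to $V_{3,B}$ blue (this uses the edge-disjointness in both colours, via forbidden monochromatic cherries). This yields a partition $V_1 \setminus \{v\} = U_2 \cup U_3$ such that every red triangle avoiding $v$ lies entirely inside $U_2 \cup V_{2,R} \cup V_{3,B}$ or entirely inside $U_3 \cup V_{2,B} \cup V_{3,R}$. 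Applying the bound to these two \emph{smaller complete tripartite graphs} of sizes $m$ and $n-m-1$ gives $(2m-5)+(2(n-m-1)-5)$, and the triangles through $v$ contribute at most $3X/n$ by the minimality of $v$'s degree; together this forces $X < 2n-4$, a contradiction. The crucial idea your outline is missing is this two-piece decomposition driven by the structural claim, which is what replaces the naive "delete one vertex" induction and makes the numbers close.
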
	

    This result is sharp up to a multiplicative constant factor. 
    Indeed, consider a complete tripartite graph $K(V_1,V_2,V_3)$ on $n$ vertices where $|V_1| = 1$ and $\big||V_2| - |V_3|\big| \le 1$.
    Colour all the edges between $V_1$ and $V_2 \cup V_3$ red. Then, colour the edges of some maximum matching between $V_2$ and $V_3$ red, leaving the remaining edges blue.
    The resulting colouring is good and contains $\big \lfloor \frac{n-1}{2} \big \rfloor$ red triangles.

    \begin{proof}[\textbf{Proof of Lemma \ref{lemma:balanced}, assuming Lemma~\ref{lemma:3-parts-2-colors}}]
    Fix an arbitrary vertex $v \in V(K_n)$.
    We classify the rainbow triangles $T$ which are not fully contained in any of the neighbourhoods $N_R(v)$, $N_B(v)$ or $N_G(v)$:
	\begin{enumerate}
		\item[$\,$] \hspace{-26pt}Type 1: $T$ contains $v$; 
		\item[$\,$]\hspace{-26pt}Type 2: one of $N_R(v)$, $N_B(v)$, $N_G(v)$ contains two vertices of $T$; 
  
		\item[$\,$]\hspace{-26pt}Type 3: each of $N_R(v)$, $N_B(v)$, $N_G(v)$ contains one vertex of $T$.
  \end{enumerate}
Define {\em special edges} as the edges which are not adjacent to $v$ but form a rainbow triangle with $v$.
Special edges form a matching: otherwise, the edge from $v$ to the common vertex of two special edges would participate in two rainbow triangles, contradicting the almost Gallai property of our colouring. In particular, the number of rainbow triangles of Type 1 is at most $n/2$.
 
So, the green edges between $N_R(v)$ and $N_B(v)$, the red edges between $N_B(v)$ and $N_G(v)$, and the blue edges between $N_R(v)$ and $N_G(v)$ are special and form a matching.
Applying Lemma~\ref{lemma:dnlog} with $d=1$ to each of these three bipartite graphs, we conclude that the number of rainbow triangles of Type 2 is $O(n \log n)$.
To estimate the number of triangles of Type 3 we combine Lemma~\ref{lemma:3-parts-2-colors} with a probabilistic argument, similar to the one used in the proof of Lemma \ref{lemma:dnlog}. 	

We pick a random subset $X$ of $N_R(v) \cup N_B(v) \cup N_G(v)$ by deleting exactly one of the endpoints (chosen uniformly at random) of each special edge.
Set $X_R = X \cap N_R(v)$, $X_B = X \cap N_B(v)$ and $X_G = X \cap N_G(v)$, and let $T= K_n[X_R, X_B, X_G]$ be the complete tripartite subgraph of $K_n$ induced by $X_R$, $X_B$ and $X_G$.

By construction, $T$ contains no special edges. 
So, there is no green edge between $X_R$ and $X_B$, no red edge between $X_B$ and $X_G$, and no blue edge between $X_G$ and $X_R$.  
Consequently, there are only two kinds of rainbow triangles in $T$. If its vertices are $v_R\in X_R, v_B\in X_B, v_G\in X_G$, then either $v_Rv_B$ is red, $v_Bv_G$ is blue, and $v_Gv_R$ is green, or $v_Rv_B$ is blue, $v_Bv_G$ is green and $v_Gv_R$ is red.

Now, we introduce a $2$-colouring $f: E(T) \to \{1,2\}$ such that the $f$-monochromatic triangles are exactly the rainbow triangles in the original colouring of $T$. 
Red and blue edges between $X_R$ and $X_B$ receive colours $1$ and $2$, respectively. 
Blue and green edges between $X_B$ and $X_G$ receive colours $1$ and $2$, respectively. 
Finally, green and red edges between $X_G$ and $X_R$ receive colours $1$ and $2$, respectively. 
Then a triangle is rainbow in $T$ with respect to the initial colouring if and only if it is monochromatic with respect to the new one. So, we are in the setting of Lemma~\ref{lemma:3-parts-2-colors}, which allows us to conclude that the number of rainbow triangles in $T$ (denoted by $\tau_{\mathrm{rb}}(X_R, X_B, X_G)$) satisfies 
	\begin{align*}
		\tau_{\mathrm{rb}}(X_R, X_B, X_G) = \# \{ \text{$f$-monochromatic } \triangle'\text{s in } T \} \le 4(|X_R|+|X_B|+|X_G|) - 10 \le 4n.
	\end{align*}
	
On the other hand, we show that a random choice of $X$ will keep a constant proportion of rainbow triangles of Type 3. Note that rainbow triangles of Type 3 contain no special edges, as every special edge already lies in a rainbow triangle of Type 1. Hence each vertex of a Type 3 rainbow triangle appears in $X$ independently with probability at least $1/2$ (in fact, either $1/2$ or $1$).
Therefore, a rainbow triangle of Type 3 in $K_n$ becomes a rainbow triangle in $T$ with probability at least $1/8$. 
Therefore, the total number of rainbow triangles of Type 3 at most
\begin{align*}
8 \cdot \Ex{\tau_{\mathrm{rb}}(X_R, X_B, X_G)} \le 32n.
\end{align*}
Summing the bounds obtained for each type of rainbow triangles, we conclude the proof.
\end{proof}	
	
    \begin{proof}[\textbf{Proof of Lemma~\ref{lemma:3-parts-2-colors}}]

        By symmetry, it is sufficient to show that the number of red triangles is at most $2n-5$.
        Let $n$ be the smallest integer such that there exists a complete tripartite graph $G=G[V_1, V_2, V_3]$ on $n$ vertices with a good red-blue colouring such that the number of red triangles in it is at least $2n-4$.
		Since the lemma holds trivially for $n=3$, we assume that $n \ge 4$.
		We can also assume that every vertex in $G$ is contained in at least three red triangles.
		Indeed, if some vertex $x \in V(G)$ is contained in at most two red triangles, then removing $x$ from $G$ results in a smaller counterexample, contradicting the choice of $n$.
		
		Let $v$ be the vertex contained in the minimal number of red triangles. 
		Without loss of generality, we may assume that $v \in V_1$. For $i \in \{2,3\}$ we set
		\[
			V_{i, R} = N_R(v) \cap V_i \qquad \text{and} \qquad V_{i, B} = N_B(v) \cap V_i
		\]		
		to be the red and blue neighbourhoods of $v$ inside $V_i$, respectively.

\begin{claim}\label{claim:partition}
	For any vertex $u \in V_1 \setminus v$, either all edges connecting $u$ to $V_{2, B}$ are blue, or all edges connecting $u$ to $V_{3, B}$ are blue. 
\end{claim}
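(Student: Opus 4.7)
The plan is to argue by contradiction. Suppose there exist $x\in V_{2, B}$ and $y\in V_{3, B}$ with both $ux$ and $uy$ red. I will show that the number $r_u$ of red triangles through $u$ is at most $2$, which contradicts the minimality of $v$ since every vertex of $G$ is in at least three red triangles, and so $r_u\ge r_v\ge 3$.

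The key structural observation is that every red triangle through $u$ has at least one of its non-$u$ vertices in $V_{2, B}\cup V_{3, B}$: if $uab$ were a red triangle with $a\in V_{2, R}$ and $b\in V_{3, R}$, then $vab$ would also be red (as $va,vb,ab$ are all red), sharing the edge $ab$ with $uab$ and violating the good colouring. Hence every red triangle through $u$ consumes at least one of the red edges from $u$ into $A:=N_R(u)\cap V_{2, B}$ or $B:=N_R(u)\cap V_{3, B}$, each of which lies in at most one red triangle.

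Next I would analyse the colour of the edge $xy$. If $xy$ is blue, then $vxy$ is a blue triangle, and since $vx$ lies in at most one blue triangle, every $b\in V_{3, B}\setminus\{y\}$ must have $xb$ red; then for each $b\in B\setminus\{y\}$ the triangle $uxb$ is red, and since $ux$ lies in at most one red triangle, $|B\setminus\{y\}|\le 1$. If $xy$ is red, then $uxy$ is a red triangle, so $ux$ lies in this triangle and in no other; hence for each $b\in B\setminus\{y\}$ the edge $xb$ must be blue, making $vxb$ a blue triangle, and the uniqueness of $vx$'s blue triangle again gives $|B\setminus\{y\}|\le 1$. In either case $|B|\le 2$, and a symmetric argument gives $|A|\le 2$.

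Finally, I would close with a short case analysis on $(|A|,|B|)\in\{1,2\}^2$. The main subtlety is $|A|=|B|=2$: writing $A=\{x,x'\}$ and $B=\{y,y'\}$, the uniqueness of the blue triangles at $vx'$ and $vy'$ pins down the colour of the fourth corner edge $x'y'$, producing exactly two red triangles through $u$ with both non-$u$ vertices in $V_{2, B}\cup V_{3, B}$ (the pair $\{uxy,ux'y'\}$ in the $xy$-red case, and $\{uxy',ux'y\}$ in the $xy$-blue case). These two triangles consume all four red edges $ux,uy,ux',uy'$ from $u$ into $V_{2, B}\cup V_{3, B}$, so by the structural observation no further red triangle through $u$ exists, giving $r_u=2$. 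The remaining sub-cases ($|A|=1$ or $|B|=1$) are handled by the same edge-accounting and yield $r_u\le 2$ as well, contradicting $r_u\ge r_v\ge 3$.
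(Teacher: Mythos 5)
Your proof is correct and follows essentially the same strategy as the paper: argue by contradiction, bound $|N_R(u)\cap V_{2,B}|$ and $|N_R(u)\cap V_{3,B}|$ by $2$ (the paper packages this as a ``no monochromatic cherry'' observation in the complete bipartite graph between these two sets, whereas you case-split on the colour of $xy$), and conclude via a case analysis on these sizes that $u$ lies in at most two red triangles. One minor slip worth flagging: in the $xy$-blue sub-case of $(|A|,|B|)=(2,2)$, the colour of $x'y'$ is forced by red-triangle edge-disjointness at $ux'$ (since $ux'y$ is already red), not by blue-triangle uniqueness at $vx'$ or $vy'$---the only blue triangle you have established there is $vxy$, which touches neither $vx'$ nor $vy'$---but the conclusion is unaffected.
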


\begin{proof}
	Suppose for contradiction that both $N_R(u) \cap V_{2, B}$ and $N_R(u) \cap V_{3, B}$ are non-empty.

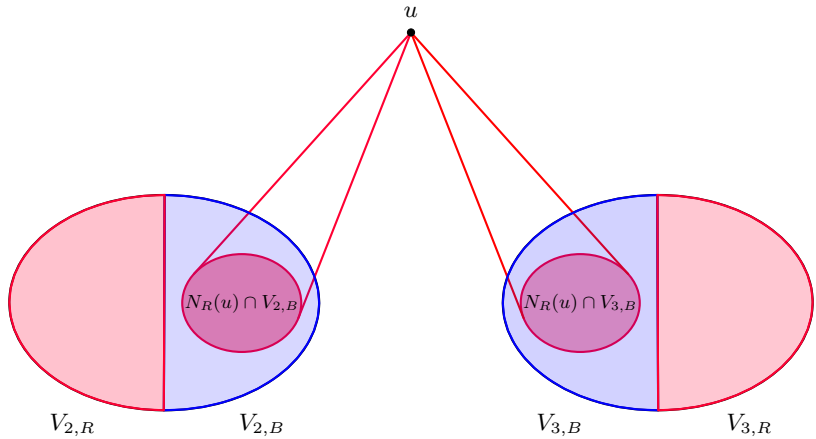
\begin{figure}[ht]
\centering
\definecolor{qqqqff}{rgb}{0,0,1}
\definecolor{ffqqqq}{rgb}{1,0,0}
\definecolor{ffqqtt}{rgb}{1,0,0.2}
\begin{tikzpicture}[line cap=round,line join=round,>=triangle 45,x=1cm,y=1cm,scale=0.9]
\clip(-6,-2) rectangle (6,4.5);
\draw [rotate around={-0.20027097795347137:(-3.644776069836047,0.005749153887732918)},line width=0.8pt] (-3.644776069836047,0.005749153887732918) ellipse (2.2888147685015383cm and 1.5916506123628433cm);
\draw [rotate around={-179.79972902204653:(3.644776069836047,0.005749153887732472)},line width=0.8pt] (3.644776069836047,0.005749153887732472) ellipse (2.2888147685015383cm and 1.5916506123628433cm);
\draw [rotate around={0:(-2.5,0)},line width=0.8pt,color=ffqqtt,fill=ffqqtt,fill opacity=0.42] (-2.5,0) ellipse (0.8791595855518389cm and 0.7231331667595404cm);
\draw [rotate around={180:(2.5,0)},line width=0.8pt,color=ffqqtt,fill=ffqqtt,fill opacity=0.35] (2.5,0) ellipse (0.8791595855518389cm and 0.7231331667595404cm);
\draw [line width=0.8pt,color=ffqqtt] (0,4)-- (-3.207091924345599,0.4297206436284907);
\draw [line width=0.8pt,color=ffqqtt] (0,4)-- (-1.663654081950697,-0.222914253702564);
\draw [line width=0.8pt,color=ffqqqq] (0,4)-- (1.663654081950697,-0.2229142537025642);
\draw [line width=0.8pt,color=ffqqqq] (0,4)-- (3.207091924345599,0.42972064362849033);
\draw [line width=0.8pt] (-3.639212644775477,1.5973900430761863)-- (-3.650339494896618,-1.5858917353007205);
\draw [line width=0.8pt] (3.639212644775477,1.5973900430761858)-- (3.650339494896618,-1.585891735300721);
\draw [shift={(-3.644776069836047,0.005749153887732918)},line width=0.8pt,color=qqqqff,fill=qqqqff,fill opacity=0.16]  (0,0) --  plot[domain=-1.5707963267948966:1.5707963267948966,variable=\t]({0.9999938911377191*2.2888147685015383*cos(\t r)+0.003495380843859593*1.5916506123628433*sin(\t r)},{-0.003495380843859593*2.2888147685015383*cos(\t r)+0.9999938911377191*1.5916506123628433*sin(\t r)}) -- cycle ;
\draw [shift={(3.644776069836047,0.005749153887732472)},line width=0.8pt,color=qqqqff,fill=qqqqff,fill opacity=0.18]  (0,0) --  plot[domain=-1.5707963267948966:1.5707963267948966,variable=\t]({-0.9999938911377191*2.2888147685015383*cos(\t r)+-0.0034953808438594708*1.5916506123628433*sin(\t r)},{-0.0034953808438594708*2.2888147685015383*cos(\t r)+0.9999938911377191*1.5916506123628433*sin(\t r)}) -- cycle ;
\draw [shift={(-3.644776069836047,0.005749153887732918)},line width=0.8pt,color=ffqqtt,fill=ffqqtt,fill opacity=0.24]  (0,0) --  plot[domain=1.5707963267948966:4.71238898038469,variable=\t]({0.9999938911377191*2.2888147685015383*cos(\t r)+0.003495380843859593*1.5916506123628433*sin(\t r)},{-0.003495380843859593*2.2888147685015383*cos(\t r)+0.9999938911377191*1.5916506123628433*sin(\t r)}) -- cycle ;
\draw [shift={(3.644776069836047,0.005749153887732472)},line width=0.8pt,color=ffqqtt,fill=ffqqtt,fill opacity=0.22]  (0,0) --  plot[domain=1.5707963267948966:4.71238898038469,variable=\t]({-0.9999938911377191*2.2888147685015383*cos(\t r)+-0.0034953808438594708*1.5916506123628433*sin(\t r)},{-0.0034953808438594708*2.2888147685015383*cos(\t r)+0.9999938911377191*1.5916506123628433*sin(\t r)}) -- cycle ;
\begin{scriptsize}
\draw[color=black] (-2.5,0) node {\Scale[0.8]{N_R(u) \cap V_{2,B}}};
\draw[color=black] (2.5,0) node {\Scale[0.8]{N_R(u) \cap V_{3,B}}};
\draw [fill=black] (0,4) circle (1.5pt);
\draw[color=black] (0,4.3) node {$u$};
\draw[color=black] (-2.2,-1.8) node {$V_{2,B}$};
\draw[color=black] (2.2,-1.8) node {$V_{3,B}$};
\draw[color=black] (-5,-1.8) node {$V_{2,R}$};
\draw[color=black] (5,-1.8) node {$V_{3,R}$};
\end{scriptsize}
\end{tikzpicture}
\caption{\Small{Partition of $V_2$ and $V_3$ into the red and blue neighbourhoods of $v$, \\ \hspace*{60pt} and the red neighbourhood of $u$ inside $V_{2,B}$ and $V_{3,B}$.}}
\label{fig:neighbourhoods}
\end{figure}

	Observe that the bipartite graph on $\big ( N_R(u) \cap V_{2, B} \big ) \cup \big ( N_R(u) \cap V_{3, B} \big )$ cannot contain a monochromatic cherry (that is, a monochromatic path with two edges).
	Indeed, suppose that $xzy$ is a monochromatic cherry with $x,y \in N_R(u) \cap V_{2, B}$ and $z \in N_R(u) \cap V_{3, B}$ (the case when $x,y \in N_R(u) \cap V_{3, B}$ and $z \in N_R(u) \cap V_{2, B}$ is completely analogous).
	If this cherry is red, then we have two red triangles $xzu$ and $yzu$ sharing an edge, which is a contradiction.
	If this cherry is blue, then we have two blue triangles $xzv$ and $yzv$ sharing an edge, again a contradiction.
	
	As a consequence of the set $\big ( N_R(u) \cap V_{2, B} \big ) \cup \big ( N_R(u) \cap V_{3, B} \big )$ not spanning a monochromatic cherry, we have
	\[\max \{ |N_R(u) \cap V_{2, B}|, |N_R(u) \cap V_{3, B}| \} \le 2.
	\]

	Now, we observe that every red triangle containing $u$ has one vertex in $N_R(u) \cap V_{2, B}$ or one vertex in $N_R(u) \cap V_{3, B}$,
    because all edges in $N_R(u) \cap N_R(v)$ must be blue.
	Moreover, as the monochromatic triangles are all edge-disjoint, there is an injective mapping from the red triangles containing $u$ to the vertices in $\big ( N_R(u) \cap V_{2, B} \big ) \cup \big ( N_R(u) \cap V_{3, B} \big )$.
	As every vertex is contained in at least three red triangles (by assumption of the minimum counterexample), we must have 
	\[ 3 \le \# \{ \triangle \text{'s containing } u \} \le |N_R(u) \cap V_{2, B}| + |N_R(u) \cap V_{3, B}| \le 4.\]
	There are two cases to be analysed:
	
	\begin{itemize}
		\item[(a)] If $|N_R(u) \cap V_{2, B}| = |N_R(u) \cap V_{3, B}| = 2$, then we must have a red matching of size two between $N_R(u) \cap V_{2, B}$ and $N_R(u) \cap V_{3, B}$:
		otherwise, we would have a blue cherry between these two sets.
		As each vertex in $N_R(u) \cap V_{2, B}$ and in $N_R(u) \cap V_{3, B}$ lies in at most one red triangle containing $u$, it follows that $u$ is contained in exactly two red triangles. This is a contradiction.
		
		\item[(b)] If $|N_R(u) \cap V_{2, B}| = 1$ and $|N_R(u) \cap V_{3, B}| = 2$ (or vice versa), then we must have a red edge $xy$ between $N_R(u) \cap V_{2, B}$ and $N_R(u) \cap V_{3, B}$.
		Any other red triangle containing $u$ cannot have a vertex in common with $xy$, but it still must have a vertex in $N_R(u) \cap V_{2, B}$ or in $N_R(u) \cap V_{3, B}$. Therefore, $u$ is contained in at most two red triangles, again giving a contradiction.
	\end{itemize}
	This proves the claim.
\end{proof}

		By Claim~\ref{claim:partition}, we may partition $V_1 \setminus v$ into two disjoint sets $U_2$ and $U_3$ in such a way that all edges between $U_2$ and $V_{2, B}$ are blue, and all edges between $U_3$ and $V_{3, B}$ are blue.
		We claim that every red triangle in $G \setminus v$ fully lies in either $U_2 \cup V_{2, R} \cup V_{3, B}$ or $U_3 \cup V_{2, B} \cup V_{3, R}$.
		Indeed, suppose that $xyz$ is a red triangle with $x \in V_1 \setminus v$, $y \in V_2$ and $z \in V_3$.
		If $x \in U_2$ then $y \in V_{2, R}$. Now, $z \in V_{3, R}$ would imply that $xyz$ and $vyz$ are two red triangles sharing an edge, which is a contradiction. Thus, we must have 
		$z \in V_{3, B}$.
		Similarly, if $x \in U_3$ then $z \in V_{3, R}$ and, as in the previous case, $y \in V_{2, B}$.

		Since the sizes of the sets $U_2 \cup V_{2, R} \cup V_{3, B}$ and $U_3 \cup V_{2, B} \cup V_{3, R}$ are less than $n$ and at least 3 (since $v$ is contained in at least 3 red triangles), we know that the theorem holds for the two tripartite graphs induced by them. Set
		\[
		m = |U_2 \cup V_{2, R} \cup V_{3, B}| \qquad \text{and} \qquad n-m-1 = |U_3 \cup V_{2, B} \cup V_{3, R}|.
		\]
		Denote by $X \ge 2n - 4$ the number of red triangles in $G$.
		As $v$ is contained in the minimal number of red triangles, by the pigeonhole principle $v$ is contained in at most $\frac{3X}{n}$ red triangles.	
		Then the total number of red triangles $X$ satisfies
		\[
		X \le (2m - 5) + (2(n-m-1) - 5) + \frac{3X}{n} \le 2n - 12 + \frac{3X}{n}.
		\]
		As $X \ge 2n - 4$, we have
		\[
		2n^2-12n \ge (n-3)X \ge (n-3)(2n-4) = 2n^2-10n+12,
		\]
		which is a contradiction.
        \end{proof}

	\section{Proof of Theorem~\ref{th:characteristic_function}}\label{sec:characteristic_function}
	
	We shall use Theorems~\ref{th:rainbow_triangles} and~\ref{th:at_least_4} combined with a decoupling trick of Berkowitz~\cite{berkowitz2018local} to bound the characteristic functions of clique counts. 
	Before doing so, we need some preparation. 

	Let $n \in \mathbb{N}$, $p \in [0,1]$ and let $G^0$ and $G^1$ be two independent copies of $G(n,p)$.
    For a non-empty set $S \se E(K_n)$ and $i \in \{0,1\}$, we denote by $X_{S}^i$ the random vector $(\mathds{1}_{e \in G^{i}}: e \in S)$.
	Let $P$ be a polynomial on variables $(y_e:e \in E(K_n))$, $v \in \{0,1\}^k$ be a vector and
	$\bB = (B_0,\ldots, B_k)$ be a partition on the edges of $K_n$.
	We denote by 
	$P(X_{B_0}^0,X_{B_1}^{v_1}\ldots,X_{B_k}^{v_k})$ the corresponding random polynomial where each variable $y_e$ with $e \in B_0$ is replaced by $\mathds{1}_{e \in G^{0}}$ and each variable $y_e$ with $e \in B_i$ and $i \ge 1$ is 
	is replaced by $\mathds{1}_{e \in G^{v_i}}$.

	Given a partition $\bB = (B_0,\ldots, B_k)$ on $E(K_n)$, the $\alpha$ operator is defined as
	\begin{align*}
		\alpha_{\bB}(P) := \sum_{v \in \{0,1\}^{k}} (-1)^{|v|} P(X_{B_0}^0,X_{B_1}^{v_1}\ldots,X_{B_k}^{v_k}),
	\end{align*}
	where $|v|:= v_1+ \ldots + v_k$.

	The following theorem was proved by Berkowitz~\cite[Lemma 6]{berkowitz2018local}.
	His statement is more general, but we state it here in the form we need.

	\begin{theorem}\label{th:decoupling}
		Let $n \in \mathbb{N}$, $p \in [0,1]$ and let $G^0$ and $G^1$ be two independent copies of $G(n,p)$.
		Let $\bB = (B_0,\ldots, B_k)$ be a partition on $E(K_n)$ and let $P$ be a polynomial on variables $(y_e:e \in E(K_n))$.
		Let $X^0 = (\mathds{1}_{e \in G^{0}}:e \in E(K_n))$ and set $\varphi_{P}(s) = \mathbb{E}_{X^0}(e^{isP(X^0)})$ to be the characteristic function of $P$. 
		Then, we have
		\begin{align*}
			|\varphi_P(s)|^{2^k} \le \mathbb{E}_{Y} \left | \mathbb{E}_{X^0} e^{is\alpha_{\bB}(P)}  \right |,
		\end{align*}
		for all $s \in \mathbb{R}$, where $X = X^0_{B_0}$ and $Y = (\mathds{1}_{e \in G^{i}}: e \in E(K_n) \setminus B_0, i \in \{0,1\})$.
	\end{theorem}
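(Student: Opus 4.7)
The plan is to prove the theorem by induction on $k$, iterating a ``square-and-decouple'' move that converts one free coordinate at a time into an averaged coupling pair. The base case $k=0$ is immediate: the partition is trivial, $\alpha_{\bB}(P) = P(X^0_{B_0}) = P(X^0)$, and both sides of the stated inequality reduce to $|\varphi_P(s)|$.

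For the inductive step it is convenient to introduce, for $0 \le j \le k$, the partial decoupling
\begin{equation*}
\alpha^{(j)}_{\bB}(P) \;:=\; \sum_{v \in \{0,1\}^j} (-1)^{|v|} P\bigl( X^0_{B_0},\, X^{v_1}_{B_1},\, \ldots,\, X^{v_j}_{B_j},\, X^0_{B_{j+1}},\, \ldots,\, X^0_{B_k} \bigr),
\end{equation*}
so that $\alpha^{(0)}_{\bB}(P) = P(X^0)$ and $\alpha^{(k)}_{\bB}(P) = \alpha_{\bB}(P)$, and set $W_j := (\mathds{1}_{e \in G^i}: e \in B_1 \cup \cdots \cup B_j,\; i \in \{0,1\})$. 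I would prove by induction on $j$ the refined claim
\begin{equation*}
|\varphi_P(s)|^{2^j} \;\le\; \mathbb{E}_{W_j} \biggl| \mathbb{E}_{X^0_{B_0},\, X^0_{B_{j+1}},\, \ldots,\, X^0_{B_k}}\, e^{is \alpha^{(j)}_{\bB}(P)} \biggr|,
\end{equation*}
which at $j = k$ recovers the theorem.

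For the step $j \to j+1$, I would first square both sides and apply Jensen's inequality $\bigl|\mathbb{E}_{W_j}(\cdot)\bigr|^2 \le \mathbb{E}_{W_j}|\cdot|^2$ to push the square inside. Then, to the remaining inner expectation I would apply the standard decoupling identity $\bigl|\mathbb{E}_U h(U)\bigr|^2 = \mathbb{E}_{U, U'}\, h(U)\,\overline{h(U')}$ with $U = X^0_{B_{j+1}}$ and an independent copy $U' = X^1_{B_{j+1}}$, leaving all other variables of the inner expectation fixed. A direct expansion of the sum defining $\alpha^{(j+1)}$ split by the value of $v_{j+1} \in \{0,1\}$ verifies the algebraic identity
\begin{equation*}
\alpha^{(j)}_{\bB}(P)\big|_{X^0_{B_{j+1}}} \;-\; \alpha^{(j)}_{\bB}(P)\big|_{X^1_{B_{j+1}}} \;=\; \alpha^{(j+1)}_{\bB}(P),
\end{equation*}
so the decoupling step produces exactly the factor $e^{is\alpha^{(j+1)}_{\bB}(P)}$ under $\mathbb{E}_{X^0_{B_{j+1}}, X^1_{B_{j+1}}}$. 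Absorbing these two new averages into the outer expectation upgrades $W_j$ to $W_{j+1}$; since the entire expression is real and nonnegative (being at least $|\varphi_P(s)|^{2^{j+1}} \ge 0$), one may insert absolute values around the inner expectation using $\mathrm{Re}(z) \le |z|$ pointwise in $W_{j+1}$, closing the induction.

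The main obstacle is essentially bookkeeping: one must carefully track which block of coordinates sits inside the inner expectation at each stage, which has already been absorbed into $W_j$, and confirm that the decoupling move on $X^0_{B_{j+1}}$ produces precisely the next alternating sum $\alpha^{(j+1)}_{\bB}(P)$. Once this indexing is set up cleanly, the proof is a straightforward iteration of Jensen's inequality combined with the $|z|^2 = z\overline{z}$ identity; no additional probabilistic or analytic input is needed.
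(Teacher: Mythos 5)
The paper does not actually prove Theorem~\ref{th:decoupling}: it is stated as a black-box citation of Berkowitz \cite[Lemma~6]{berkowitz2018local}, so there is no internal proof to compare your argument against. Your overall strategy --- induct on the number of decoupled blocks, squaring and applying Cauchy--Schwarz to convert one $X^0_{B_{j+1}}$ into a coupled pair $(X^0_{B_{j+1}},X^1_{B_{j+1}})$ at each step --- is indeed the standard Weyl-differencing/decoupling route and is the right idea. However, the inductive step as you have written it has a gap in the decoupling move.

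Write $V_{j}=(X^0_{B_0},X^0_{B_{j+1}},\dots,X^0_{B_k})$, $V_{j+1}=(X^0_{B_0},X^0_{B_{j+2}},\dots,X^0_{B_k})$ and $U=X^0_{B_{j+1}}$, so $V_j=(V_{j+1},U)$. After your first Jensen step you are left with $\mathbb{E}_{W_j}\bigl|\mathbb{E}_{V_j}e^{is\alpha^{(j)}}\bigr|^2$. You then apply $\bigl|\mathbb{E}_U h(U)\bigr|^2=\mathbb{E}_{U,U'}h(U)\overline{h(U')}$ with, implicitly, $h(U)=\mathbb{E}_{V_{j+1}}e^{is\alpha^{(j)}}\big|_U$. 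But then $h(U)\overline{h(U')}=\bigl(\mathbb{E}_{V_{j+1}}e^{is\alpha^{(j)}|_U}\bigr)\bigl(\mathbb{E}_{V_{j+1}}e^{-is\alpha^{(j)}|_{U'}}\bigr)$ is a \emph{product of two independent} $V_{j+1}$-integrals, and this is not of the form $\mathbb{E}_{V_{j+1}}e^{is\alpha^{(j+1)}}$ needed for the $(j+1)$-claim; your algebraic identity $\alpha^{(j)}|_U-\alpha^{(j)}|_{U'}=\alpha^{(j+1)}$ combines the two exponentials pointwise in $V_{j+1}$, not after separate averaging. The fix is to insert one more Jensen/Cauchy--Schwarz step before decoupling $U$:
\begin{equation*}
\bigl|\mathbb{E}_{V_j}e^{is\alpha^{(j)}}\bigr|^2=\bigl|\mathbb{E}_{V_{j+1}}\mathbb{E}_{U}e^{is\alpha^{(j)}}\bigr|^2\le\mathbb{E}_{V_{j+1}}\bigl|\mathbb{E}_{U}e^{is\alpha^{(j)}}\bigr|^2=\mathbb{E}_{V_{j+1}}\mathbb{E}_{U,U'}e^{is\alpha^{(j+1)}},
\end{equation*}
so that $V_{j+1}$ is shared between the two exponentials. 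Reorganizing, one gets $\mathbb{E}_{W_{j+1}}\mathbb{E}_{V_{j+1}}e^{is\alpha^{(j+1)}}$, and realness/nonnegativity of this quantity follows from the pointwise identity $\mathbb{E}_{U,U'}e^{is\alpha^{(j+1)}}=\bigl|\mathbb{E}_{U}e^{is\alpha^{(j)}}\bigr|^2\ge 0$ --- \emph{not}, as you state, from the inequality ``being at least $|\varphi_P(s)|^{2^{j+1}}\ge 0$'' (an inequality against a nonnegative real does not force a complex number to be real). With these two corrections your induction closes and the argument is sound.
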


	Now we are ready to prove Theorem~\ref{th:characteristic_function}.
	For $t \ge 3$, let $\K_t$ be the collection of all copies of $K_t$ in $K_n$ and let $X_t$ be the polynomial counting the number of copies of $K_t$ in $K_n$.
	For a partition $\bB = (B_0,\ldots, B_k)$ on $E(K_n)$ and a clique $K \in \K_t$, let $B_i^K$ be the set of edges of $K$ which are in $B_i$.
	For a set $S \se E(K_n)$ and $i \in \{0,1\}$, we set $x_{S}^i:=\prod_{e \in S} \mathds{1}_{e \in G^{i}}$.
	As $\alpha$ is a linear operator, we have
	\begin{align}\label{eq:alpha}
		\alpha_{\bB}(X_t) & = \sum_{K \in \K_t} \sum_{v \in \{0,1\}^{k}} (-1)^{|v|} x_{B_0^K}^0 x_{B_1^K}^{v_1} \cdots x_{B_k^K}^{v_k} \nonumber \\
		& = \sum_{K \in \K_t} x_{B_0^K} \left (x_{B_1^K}^0 - x_{B_1^K}^1 \right ) \cdots \left ( x_{B_k^K}^0 - x_{B_k^K}^1 \right ).
	\end{align}

	From now on we fix $t \ge 3$ and a partition $\bB = (B_0,\ldots, B_k)$ on $E(K_n)$ with $k = \binom{t}{2}-1$ which generates a almost $t$-Gallai colouring of $K_n$ with the maximum number of rainbow $t$-cliques (we see edges in $B_i$ as edges of colour $i$). 
	By Theorems~\ref{th:rainbow_triangles} and~\ref{th:at_least_4}, we have that $\bB$ contains at least $(1/2-o(1))n\log n$ rainbow $t$-cliques when $t = 3$ and at least $n^{2-o(1)}$ rainbow $t$-cliques when $t \ge 4$.

	As $x_{\emptyset}^0 = x_{\emptyset}^1 = 1$, we have that the product $(x_{B_1^K}^0 - x_{B_1^K}^1) \cdots (x_{B_k^K}^0 - x_{B_k^K}^1)$ is zero if $K$ misses one of the colours $1, \ldots, k$.
	Let $\alpha_{\bB, > 0}(X_t)$ be the restriction of $\alpha_{\bB}(X_t)$ where the sum is taken only over cliques $K \in \K_t$ which contain all colours $1, \ldots, k$, but not the colour 0, and let $\K^{\text{rb}}_{\bB}$ be the collection of all rainbow copies of $K_t$ in $K_n$ under the partition $\bB$.
	For each rainbow $t$-clique $K \in \K^{\text{rb}}_{\bB}$, let $f^K$ be the only edge in $K$ which is in $B_0$.
	For $K \in \K^{\text{rb}}_{\bB}$, we set
	\begin{align*}
		C_K = \prod_{e \in E(K) \setminus \{ f^K\}} (x_{e}^0 - x_{e}^1) \qquad \text{and} \qquad
		\alpha_{\bB}^{\text{rb}}(X_t) = \sum_{K \in \K^{\text{rb}}_{\bB}} C_K x_{f^K}^0.
	\end{align*}
	It follows from our analysis combined with~\eqref{eq:alpha} that $\alpha_{\bB}(X_t) = \alpha_{\bB}^{\text{rb}}(X_t) + \alpha_{\bB, > 0}(X_t)$.
	As $\alpha_{\bB, > 0}(X_t)$ is independent of $X^0_{B_0}$, 
	Theorem~\ref{th:decoupling} implies that
	\begin{align}\label{eq:decoupling-rainbow-1}
	\left |\Ex{e^{isX_t}} \right |^{2^k}
        & \le  \mathbb{E}_{Y} \left | \mathbb{E}_{X} \big ( e^{is (\alpha_{\bB}^{\text{rb}}(X_t) + \alpha_{\bB, > 0}(X_t))} \big )  \right | \nonumber \\
        & \le
        \mathbb{E}_{Y} \left | e^{is\alpha_{\bB, > 0}(X_t)} \mathbb{E}_{X} \big ( e^{is\alpha_{\bB}^{\text{rb}}(X_t)} \big )  \right | \nonumber \\
        & \le \mathbb{E}_{Y} \left | \mathbb{E}_{X} \big ( e^{is \alpha_{\bB}^{\text{rb}}(X_t)} \big )  \right |,
	\end{align}
	for all $s \in \mathbb{R}$, where $X = X^0_{B_0}$ and $Y = (\mathds{1}_{e \in G^{i}}: e \in E(K_n) \setminus B_0, i \in \{0,1\})$.

	Observe that the coefficients $C_K$ only depend on $Y$ and the variables $x_{f^K}^0$ only depend on $X$.
	As all rainbow $t$-cliques are edge-disjoint, the random variables $x_{f^K}^0$ are independent, and hence it follows from~\eqref{eq:decoupling-rainbow-1} that
	\begin{align}\label{eq:decoupling-rainbow-2}
		\left |\Ex{e^{isX_t}} \right |^{2^k} \le 
		\mathbb{E}_{Y} \prod_{K \in \K^{\text{rb}}_{\bB}} \left | \mathbb{E}_{X} e^{is C_K x_{f^K}^0 }  \right |
	\end{align}
	for all $s \in \mathbb{R}$.
    
	Now we use the following bound on the characteristic function of Bernoulli random variable.
	For a proof, see~\cite[Lemma 1]{gilmer2016local}.
	Below, $\|x\|$ denotes the distance from $x$ to the nearest integer.
	
	\begin{lemma}\label{lemma:charfuncbernoulli}
		Let $p \in [0,1]$, $n \in \mathbb{N}$ and $t \in \mathbb{R}$. If $B$ is a $p$-Bernoulli random variable, then
		\[\big |\Ex{e^{itB}} \big | \le 1-8p(1-p)\left \| \dfrac{t}{2\pi} \right \|^2 .\]
	\end{lemma}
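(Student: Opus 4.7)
The plan is to prove Lemma~\ref{lemma:charfuncbernoulli} by a direct computation of $|\Ex{e^{itB}}|^2$, followed by a reduction to the classical inequality $\sin(\pi x) \ge 2x$ on $[0, 1/2]$. The statement is purely about a single Bernoulli random variable, so no probabilistic machinery is needed beyond writing out the characteristic function.

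First, I would expand $\Ex{e^{itB}} = (1-p) + p e^{it}$ and compute its squared modulus:
\[
|\Ex{e^{itB}}|^2 = \big((1-p) + p\cos t\big)^2 + p^2 \sin^2 t = 1 - 2p(1-p)(1 - \cos t),
\]
where the simplification uses $\sin^2 t + \cos^2 t = 1$ together with $(1-p)^2 + p^2 = 1 - 2p(1-p)$. The half-angle identity $1 - \cos t = 2\sin^2(t/2)$ then yields $|\Ex{e^{itB}}|^2 = 1 - 4p(1-p)\sin^2(t/2)$.

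Next, I would convert the trigonometric factor $\sin^2(t/2)$ into the normalised quantity $\|t/(2\pi)\|^2$. Setting $s = t/(2\pi)$ and writing $s = m + r$ for the nearest integer $m$ to $s$, with $|r| = \|s\| \le 1/2$, periodicity of $|\sin|$ gives $|\sin(t/2)| = |\sin(\pi s)| = \sin(\pi \|s\|)$. Since $x \mapsto \sin(\pi x)$ is concave on $[0, 1/2]$ and equals $2x$ at both endpoints $x = 0$ and $x = 1/2$, the inequality $\sin(\pi x) \ge 2x$ holds throughout $[0, 1/2]$; squaring yields $\sin^2(t/2) \ge 4\|t/(2\pi)\|^2$.

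Combining these steps gives $|\Ex{e^{itB}}|^2 \le 1 - 16p(1-p)\|t/(2\pi)\|^2$, and applying the elementary bound $\sqrt{1-y} \le 1 - y/2$ for $y \in [0, 1]$ produces the stated inequality. The main obstacle — really the only subtlety — is carefully tracking constants through the periodicity and concavity arguments so that the factor $8$ (rather than a smaller constant) emerges; every other step is routine algebra or standard trigonometry. I also note that the parameter $n$ in the statement plays no role in the argument.
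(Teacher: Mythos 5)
Your proof is correct. Note that the paper itself does not supply a proof of this lemma but cites it to Gilmer and Kopparty~\cite{gilmer2016local} (their Lemma~1); your computation is the standard one and is what one finds in that reference. Every step checks out: $|\Ex{e^{itB}}|^2 = 1 - 2p(1-p)(1-\cos t) = 1 - 4p(1-p)\sin^2(t/2)$; periodicity reduces $|\sin(t/2)|$ to $\sin(\pi\|t/(2\pi)\|)$; concavity of $\sin(\pi x)$ on $[0,1/2]$ together with the matching endpoint values gives $\sin(\pi x)\ge 2x$; and since $16p(1-p)\|t/(2\pi)\|^2 \le 16\cdot\tfrac14\cdot\tfrac14 = 1$, the bound $\sqrt{1-y}\le 1-y/2$ applies and yields the constant $8$. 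The only cosmetic remark is that the hypothesis $n\in\mathbb{N}$ is indeed vestigial, as you observe.
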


	By Lemma~\ref{lemma:charfuncbernoulli} combined with~\eqref{eq:decoupling-rainbow-2}, we obtain
	\begin{align}\label{eq:decoupling-rainbow-3}
		\left |\Ex{e^{isX_t}} \right |^{2^k} \le 
		\mathbb{E}_{Y} \left (\prod_{K \in \K^{\text{rb}}_{\bB}} \left ( 1-8p(1-p)\left \| \dfrac{C_K s}{2\pi } \right \|^2 \right ) \right ).
	\end{align}
	for all $s \in \mathbb{R}$.
	Let $R$ be the random variable which counts the number of cliques in $\K^{\text{rb}}_{\bB}$ for which $|C_K| = 1$.
	As $1/2 \ge |s/(2\pi)| = \| s/(2\pi) \|$ for all $s \in [-\pi, \pi]$, it follows from~\eqref{eq:decoupling-rainbow-3} that
	\begin{align}\label{eq:decoupling-rainbow}
		\left |\Ex{e^{isX_t}} \right |^{2^k} \le 
		\mathbb{E}_{Y} \left ( \exp \left ( -8p(1-p) \dfrac{s^2R}{(2\pi)^2 } \right ) \right )
	\end{align}
	for all $s \in [-\pi, \pi]$.

	Note that $|C_K| = 1$ if and only if $x^0_e \neq x^1_e$ for all $e \in E(K) \setminus \{ f^K\}$, which occurs with probability $(2p(1-p))^{\binom{t}{2}-1}$.
	As all rainbow $t$-cliques are edge-disjoint, $R$ is distributed as a binomial random variable with parameters $|\K^{\text{rb}}_{\bB}|$ and $q:= (2p(1-p))^{\binom{t}{2}-1}$.
	By Chernoff's inequality\footnote{Chernoff's inequality states that if $Z$ is a binomial random variable, then for all $r \ge 0$ we have $\Pr{|Z-\Ex{Z}|\ge r}\le 2e^{-r^2/(2\Ex{Z}+r)}$.} we have that $R \ge |\K^{\text{rb}}_{\bB}|q/2$ with probability at least $1-2\exp(-|\K^{\text{rb}}_{\bB}|q/10)$.
	Thus, it follows from~\eqref{eq:decoupling-rainbow} that 
	\begin{align*}
		\left |\Ex{e^{isX_t}} \right |^{2^k} & \le 
		\exp \left ( -p(1-p) \dfrac{s^2|\K^{\text{rb}}_{\bB}|q}{\pi^2 } \right ) + 2 \exp \left ( -\dfrac{|\K^{\text{rb}}_{\bB}|q}{10} \right )\\
		& \le \exp \left ( - \Omega \big (s^2|\K^{\text{rb}}_{\bB}|\big)\right ).
	\end{align*}
	In the last inequality we used that $p$ is constant and that $|s| \le \pi$.
	Finally, our theorem follows from the lower bounds on $|\K^{\text{rb}}_{\bB}|$ in Theorem~\ref{th:rainbow_triangles} (for $t = 3$) and in Theorem~\ref{th:at_least_4} (for $t \ge 4$).



\end{document}